\newtheorem{theorem}{Theorem}[section]
\newtheorem{lemma}[theorem]{Lemma}
\newtheorem{proposition}[theorem]{Proposition}
\newtheorem{corollary}[theorem]{Corollary}
\newtheorem{step}{Step}[subsection]
\theoremstyle{definition}
\theoremstyle{remark}
\newcommand{\sA}{\mathcal{A}}
\newcommand{\sB}{\mathcal{B}}
\newcommand{\sD}{\mathcal{D}}
\newcommand{\sH}{\mathcal{H}}
\newcommand{\sK}{\mathcal{K}}
\newcommand{\sL}{\mathcal{L}}
\newcommand{\sO}{\mathcal{O}}
\newcommand{\sP}{\mathcal{P}}
\newcommand{\sR}{\mathcal{R}}
\newcommand{\sV}{\mathcal{V}}
\newcommand{\sX}{\mathcal{X}}
\newcommand{\sY}{\mathcal{Y}}
\newcommand{\bbC}{\mathbb{C}}
\newcommand{\bbG}{\mathbb{G}}
\newcommand{\bbP}{\mathbb{P}}
\newcommand{\bbQ}{\mathbb{Q}}
\newcommand{\bbZ}{\mathbb{Z}}
\newcommand{\onto}{\rightarrow\hspace*{-.14in}\rightarrow}
\DeclareMathOperator{\Alb}{Alb}
\DeclareMathOperator{\Div}{Div}
\DeclareMathOperator{\Pic}{Pic}
\DeclareMathOperator{\coker}{coker}
\DeclareMathOperator{\im}{im}
\DeclareMathOperator{\id}{id}
\DeclareMathOperator{\Hom}{Hom}
\DeclareMathOperator{\Jac}{Jac}
\DeclareMathOperator{\Aut}{Aut}
\DeclareMathOperator{\Prym}{Prym}
\DeclareMathOperator{\Gal}{Gal}
\DeclareMathOperator{\End}{End}
\DeclareMathOperator{\pr}{pr}
\DeclareMathOperator{\Corr}{Corr}
\begin{document}
\title[On The Generic Prym of a Cyclic Covering] {The generic isogeny decomposition of the Prym Variety of a cyclic branched covering}

\author{Theodosis Alexandrou}
\setstretch{1.35}

\keywords{}
\subjclass[2010]{}

\makeatletter
  \hypersetup{
    pdfauthor=,
    pdfsubject=\@subjclass,
    pdfkeywords=\@keywords
  }
\makeatother
\maketitle
\begin{abstract} Let $f\colon S'\longrightarrow S$ be a cyclic branched covering of smooth projective surfaces over $\bbC$ whose branch locus $\Delta\subset S$ is a smooth ample divisor. Pick a very ample complete linear system $|\sH|$ on $S$, such that the polarized surface $(S, |\sH|)$ is not a scroll nor has rational hyperplane sections. For the general member $[C]\in|\sH|$ consider the $\mu_{n}$-equivariant isogeny decomposition of the Prym variety $\Prym(C'/C)$ of the induced covering $f\colon C'\coloneqq f^{-1}(C)\longrightarrow C$:\[\Prym(C'/C)\sim\prod_{d|n,\ d\neq1}\sP_{d}(C'/C).\] We show that for the very general member $[C]\in|\sH|$ the isogeny component $\sP_{d}(C'/C)$ is $\mu_{d}$-simple with $\End_{\mu_{d}}(\sP_{d}(C'/C))\cong\bbZ[\zeta_{d}]$. In addition, for the non-ample case we reformulate the result by considering the identity component of the kernel of the map $\sP_{d}(C'/C)\subset\Jac(C')\longrightarrow\Alb(S')$.
\end{abstract}

\begin{section}{Introduction}\label{sec:1}
The main result of this paper is the following:
\begin{theorem}\label{thm:1.1} Let $S$ be a smooth projective surface over $\bbC$ with an ample line bundle $\sL$. Assume $\Delta\in|\sL^{\otimes n}|$ is smooth and consider the $n$-fold cyclic covering $f\colon {S}'\longrightarrow S$ branched along the divisor $\Delta$. Given a very ample complete linear system $|\sH|$ on $S$, such that $(S,|\sH|)$ is not a scroll nor has rational hyperplane sections. Then, for the very general member $[C]\in |\sH|$ we have that \[\Prym({C}'/C)\sim\prod_{d|n,\ d\neq 1}\sP_{d}(C'/C),\]with $\End_{\mu_{d}}(\sP_{d}(C'/C))\cong \mathbb{Z}[\zeta_{d}]$. Especially, each $\sP_{d}(C'/C)$ is a $\mu_{d}$-simple abelian variety.
\end{theorem}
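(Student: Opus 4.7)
The decomposition $\Prym(C'/C)\sim\prod_{d\mid n,\,d\neq 1}\sP_{d}(C'/C)$ is formal from representation theory of the Galois group $\mu_{n}$: the projector onto the sum of characters of exact order $d$ defines $\sP_{d}(C'/C)$ up to isogeny, and the induced faithful $\mu_{d}$-action on $\sP_{d}(C'/C)$ produces a canonical embedding $\bbZ[\zeta_{d}]\hookrightarrow\End_{\mu_{d}}(\sP_{d}(C'/C))$. The substance of the theorem is the reverse inclusion for very general $[C]$, and $\mu_{d}$-simplicity will follow at once from the fact that $\bbQ(\zeta_{d})$ is a field.

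My plan is to pass to the universal family. Let $U\subset|\sH|$ be the open subset parameterizing smooth members meeting $\Delta$ transversely, and let $\pi\colon\sC'\to U$ be the induced family of cyclic covers. The $\bbZ$-VHS $\bbV=R^{1}\pi_{*}\bbZ$ decomposes $\mu_{n}$-equivariantly into isotypic local subsystems $\bbV^{(d)}$ whose fibers at $[C]$ represent $\sP_{d}(C'/C)$. By the deformation principle for endomorphisms of variations of Hodge structure (equivalently, the theorem of the fixed part), for a very general $[C]\in U$ one has
\[\End_{\mu_{d}}(\sP_{d}(C'/C))\otimes\bbQ \;=\; \End_{\bbQ(\zeta_{d})[\pi_{1}(U,[C])]}\!\bigl(\bbV^{(d)}_{C}\otimes\bbQ\bigr).\]
It therefore suffices to show that the $\bbQ(\zeta_{d})$-linear commutant of the monodromy representation $\rho_{d}\colon\pi_{1}(U,[C])\to\Aut_{\bbQ(\zeta_{d})}(\bbV^{(d)}_{C}\otimes\bbQ)$ is scalar $\bbQ(\zeta_{d})$; combined with the maximality of $\bbZ[\zeta_{d}]$ as an order in $\bbQ(\zeta_{d})$, this yields the integral equality.

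The monodromy step is where the hypothesis on $(S,|\sH|)$ enters. The exclusion of scrolls and of rational hyperplane sections is exactly the setting of Beauville's theorem on monodromy of Lefschetz pencils: for a general pencil in $|\sH|$, the vanishing cycles $\delta\in H^{1}(C,\bbZ)$ form a single $\pi_{1}(U,[C])$-orbit and the associated Picard-Lefschetz transvections generate a finite-index subgroup of $\mathrm{Sp}(H^{1}(C,\bbZ))$. Pulling back via $f\colon C'\to C$ and projecting onto each isotypic component $\bbV^{(d)}$ produces Hermitian transvections on $\bbV^{(d)}_{C}$ with respect to the $\bbQ(\zeta_{d})$-valued Hermitian form induced by the polarization. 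A standard complex reflection group argument, in the spirit of Deligne-Mostow and Beauville's original treatment, should then identify the Zariski closure of the group generated by these transvections with the full unitary group preserving the Hermitian form, whose commutant in $\End_{\bbQ(\zeta_{d})}(\bbV^{(d)}_{C}\otimes\bbQ)$ is precisely $\bbQ(\zeta_{d})$.

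The main obstacle will be the equivariant upgrade of the vanishing cycle analysis. Concretely, one must verify that for each $d\mid n$ with $d\neq 1$, the projection of a vanishing cycle $f^{*}\delta$ onto $\bbV^{(d)}_{C}$ is nonzero, so that its associated Hermitian transvection is non-trivial, and that the $\pi_{1}(U)$-orbit of such projections remains large enough to fill out the unitary group. This requires a careful Picard-Lefschetz analysis of a nodal degeneration $C\rightsquigarrow C_{0}\in|\sH|$ and its cyclic lift $C'\rightsquigarrow C'_{0}=f^{-1}(C_{0})$: the $\mu_{n}$-action permutes the nodes over a node of $C_{0}$, and one must check that the corresponding ensemble of vanishing cycles has non-trivial component in every nontrivial isotypic piece. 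The hypotheses that $(S,|\sH|)$ is neither a scroll nor has rational hyperplane sections are precisely what should rule out the degenerate situations where an entire $\bbV^{(d)}$ is monodromy-invariant, so the strategy is to reduce the equivariant bigness of monodromy to these classical non-degeneracy conditions.
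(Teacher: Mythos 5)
Your route is genuinely different from the paper's. The paper never computes a monodromy group: it follows Ciliberto--Van der Geer, representing a $\mu_{d}$-equivariant endomorphism of $\Jac(C'_{\eta})$ by a correspondence $T$ on $\sY\times_{U}\sY$ (Proposition \ref{prop:2.2}), restricting $T$ to a general net $\Sigma\subset|\sH|_{x}$ to obtain a rational map $\phi_{\Sigma,T}\colon S'\dashrightarrow\Pic(S')$, computing the intersection divisor $E_{y}=\Gamma_{y}\cap C'$ (Lemma \ref{lem:4.1}), and concluding that $T_{C'}$ acts on $\Prym(C'/C)$ as an integral combination of powers of $\sigma^{*}$ because $\im(\Pic^{0}(S')\to\Jac(C'))\cap\Prym(C'/C)$ is finite --- this is where the ampleness of $\Delta$ enters, via $\Alb(S')\cong\Alb(S)$. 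Nodal degenerations appear in the paper only in the auxiliary reductions of Section \ref{sec:3} (generic versus geometric generic fibre, via the rank-$\varphi(d)$ torus of Lemma \ref{lem:3.2}), not as a source of monodromy.

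The difficulty with your proposal is that its central step is only a plan. The reduction to the commutant of the monodromy via the theorem of the fixed part is fine (modulo passing to a finite-index subgroup, which is harmless for Zariski closures, and modulo checking that $\mathbb{V}^{(d)}$ has no nonzero invariant part when $q(S)>0$ --- this needs $\Alb(\sigma)=\id$, hence again the ampleness of $\Delta$). But the assertion that the monodromy of $\mathbb{V}^{(d)}$ is Zariski dense in the unitary group of the Hermitian form is not a consequence of Beauville's theorem on $H^{1}(C)$: that theorem controls the monodromy downstairs, on the hyperplane sections of $S$, whereas $\mathbb{V}^{(d)}$ is an eigenspace local system of the cohomology of the cyclic covers $C'$. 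The hypotheses on $(S,|\sH|)$ give irreducibility of the discriminant and hence a single orbit of vanishing cycles on $C$; upstairs they give you, around each nodal member, a commuting product of $n$ transvections along the cycles over the node, i.e.\ a unipotent Hermitian transvection on each $\mathbb{V}^{(d)}$. Deciding whether the group generated by a conjugacy class of such transvections is dense in the full unitary group is precisely a Deligne--Mostow/McMullen type problem, and the ``standard complex reflection group argument'' you invoke requires as input the irreducibility (and primitivity) of the monodromy action on $\mathbb{V}^{(d)}$ --- which is essentially the statement you are trying to prove and which nothing in your outline establishes. As written, the proposal therefore has a genuine gap at its decisive step; the paper's correspondence argument is exactly the device that avoids having to prove any big-monodromy statement.
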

If we restrict to the case of double coverings, we note that the involution $\sigma$ of the covering $f$ acts as $-\id$ on $\sP_{2}(C'/C)=\Prym(C'/C)$ and thus, $\End_{\mu_{2}}(\Prym(C'/C))=\End(\Prym(C'/C))$. In particular, \eqref{thm:1.1} can be stated as follows:
 \begin{corollary}\label{thm:1.2} Let $S$ be a smooth projective surface over $\bbC$ with an ample line bundle $\sL$. Assume $\Delta\in|\sL^{\otimes 2}|$ is smooth and consider the double covering $f\colon {S}'\longrightarrow S$ branched along the divisor $\Delta$. Given a very ample complete linear system $|\sH|$ on $S$, such that $(S,|\sH|)$ is not a scroll nor has rational hyperplane sections. Then, for the very general member $[C]\in |\sH|$ we have that \[\End(\Prym(C'/C))\cong \bbZ.\]\end{corollary}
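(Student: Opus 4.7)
The plan is to deduce Corollary~\ref{thm:1.2} directly from Theorem~\ref{thm:1.1} specialized to $n=2$; the entire content of the corollary is already packaged into the theorem, so the task reduces to checking three elementary identifications.

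First I would observe that for $n=2$ the only divisor $d\mid n$ with $d\neq 1$ is $d=2$, so the product $\prod_{d\mid n,\ d\neq 1}\sP_{d}(C'/C)$ collapses to the single factor $\sP_{2}(C'/C)$, and in fact $\Prym(C'/C)=\sP_{2}(C'/C)$ by the very construction of the isogeny decomposition.

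Next I would verify the identity $\End_{\mu_{2}}(\Prym(C'/C))=\End(\Prym(C'/C))$ already flagged in the sentence preceding the statement. By definition $\Prym(C'/C)$ is the anti-invariant part of $\Jac(C')$ under the pullback $\sigma^{*}$ of the covering involution, so $\sigma$ acts as $-\id$ on it. Because $-\id$ is central in the endomorphism ring of any abelian variety, the $\mu_{2}$-equivariance condition imposes no additional constraint, and the two endomorphism rings coincide.

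Finally, applying Theorem~\ref{thm:1.1} with $d=2$ gives $\End_{\mu_{2}}(\sP_{2}(C'/C))\cong\bbZ[\zeta_{2}]=\bbZ[-1]=\bbZ$, and combining the previous two identifications yields $\End(\Prym(C'/C))\cong\bbZ$. Since the substantive work has already been carried out in Theorem~\ref{thm:1.1}, there is no genuine obstacle here; the only point that needs a moment of care is matching sign conventions so that the involution indeed acts as $-\id$ on the Prym rather than inducing some twisted sign on the covering — but this is immediate from the standard definition of the Prym variety of a double cover.
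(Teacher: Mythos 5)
Your proposal is correct and matches the paper's own justification, which is exactly the observation that for $n=2$ one has $\Prym(C'/C)=\sP_{2}(C'/C)=\ker^{0}(\sigma^{*}+\id)$, so $\sigma^{*}$ acts as $-\id$ there, the $\mu_{2}$-equivariance condition is vacuous, and $\bbZ[\zeta_{2}]=\bbZ$. Nothing further is needed beyond Theorem~\ref{thm:1.1}.
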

The condition the line bundle $\sL$ is ample in \eqref{thm:1.1} implies that $\Alb(f)\colon\Alb(S')\longrightarrow\Alb(S)$ is an isomorphism cf. page \pageref{10} and therefore the map $\sP_{d}(C'/C)\longrightarrow\Alb(S')$ is trivial. For the general situation one needs to consider the abelian subvariety \[\sR_{d}(C',C,S')\coloneqq\ker^{0}(\sP_{d}(C'/C)\longrightarrow\Alb(S')).\] Then, the result can be reformulated as follows: 
\begin{theorem}\label{thm:1.3} Let $S$ be a smooth projective surface over $\bbC$ with a line bundle $\sL$. Assume $\Delta\in|\sL^{\otimes n}|$ is smooth and consider the $n$-fold cyclic covering $f\colon {S}'\longrightarrow S$ branched along the divisor $\Delta$. Given a very ample complete linear system $|\sH|$ on $S$, such that $(S,|\sH|)$ is not a scroll nor has rational hyperplane sections. Then, exactly one of the following assertions holds true:
\begin{enumerate}
\item[$(\rm{i})$] For the general member $[C]\in|\sH|$ we have that $\sR_{d}(C',C,{S}')=0$.\\
\item[$(\rm{ii})$] For the very general member $[C]\in|\sH|$ we have that $\End_{\mu_{d}}(\sR_{d}(C',C,S'))\cong\bbZ[\zeta_{d}]$.
\end{enumerate}
\end{theorem}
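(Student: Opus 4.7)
The plan is to imitate the architecture of the proof of Theorem~\ref{thm:1.1}, working with the family version of $\sR_{d}(C',C,S')$ over a Zariski-open locus $B\subset|\sH|$ parameterising smooth curves $C$ that meet $\Delta$ transversally. Over $B$ one has the universal curve $\sC\to B$, its preimage $\sC'\coloneqq f^{-1}(\sC)\to B$, the relative Prym with $\mu_{n}$-action, and the resulting family of isogeny components $\sP_{d}(\sC'/\sC)\to B$. Restricting the Albanese morphism $\Jac(\sC'/B)\to\Alb(S')\times B$ to the $\zeta_{d}$-eigenpiece and taking relative identity component of the kernel produces a family $\scrR_{d}\to B$ whose fibre over $[C]$ is precisely $\sR_{d}(C',C,S')$.

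The first step is the dichotomy. The function $b\mapsto \dim\scrR_{d,b}$ is lower semicontinuous on $B$; hence either $\scrR_{d}$ is fibrewise zero on a Zariski-open, which gives~$(\rm{i})$, or $\scrR_{d}\to B$ is an abelian scheme of positive relative dimension over a dense open. These possibilities are mutually exclusive, so I only need to analyse the second. In that case, $\mu_{d}$-equivariant endomorphisms of the generic fibre are detected by $\mu_{d}$-invariant Hodge classes on $\scrR_{d,b}^{\vee}\times\scrR_{d,b}$; by the countable-union argument used for Theorem~\ref{thm:1.1}, for very general $b\in B$ every such class is invariant under the monodromy representation of $\pi_{1}(B,b)$ on $H^{2}$. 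The problem therefore reduces to identifying the monodromy-invariants inside $\End_{\mu_{d}}(\scrR_{d,b})\otimes\bbQ$.

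The crux is to show that the monodromy acts irreducibly on the $\zeta_{d}$-eigenspace of $H^{1}(\scrR_{d,b},\bbC)$ as a module over $\bbQ(\zeta_{d})$. This is exactly the reason for quotienting out the contribution from $\Alb(S')$: the subspace $f^{*}H^{1}(S',\bbQ)\subset H^{1}(C',\bbQ)$ is a constant local system along $B$ and would obstruct irreducibility if one worked with $\sP_{d}$ itself. With the fixed part removed, the hypotheses that $(S,|\sH|)$ is neither a scroll nor has rational hyperplane sections are precisely those that permit a Lefschetz-type vanishing-cycle argument, run in a pencil of hyperplane sections and then transferred to the $\mu_{d}$-eigenspace via the cyclic covering. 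Granted irreducibility, the $\mu_{d}$-equivariant monodromy-invariants form a division algebra $D$ over $\bbQ(\zeta_{d})$ inside $\End_{\mu_{d}}(H^{1}(\scrR_{d,b},\bbQ))$; presence of the cyclotomic action and a dimension count force $D=\bbQ(\zeta_{d})$, and integrality gives $\End_{\mu_{d}}(\sR_{d}(C',C,S'))\cong\bbZ[\zeta_{d}]$.

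The main obstacle will be Step~$3$: establishing that the monodromy on the eigenspace of $H^{1}(C',\bbQ)/f^{*}H^{1}(S',\bbQ)$ is large enough. In the ample setting of Theorem~\ref{thm:1.1} the isomorphism $\Alb(S')\cong\Alb(S)$ trivialises the bottom part and one can run a clean Lefschetz pencil argument on $S$; in the present non-ample setting, $S'$ may have its own irregularity beyond that of $S$, and one must carefully distinguish the "new" vanishing cycles created by hyperplane sections of $S$ from those that descend from $S'$. Once the local system $R^{1}\pi_{*}\bbQ$ of the universal curve family is correctly split into a monodromy-constant summand (pulled back from $\Alb(S')$) and a complementary summand carrying the vanishing cycles, the argument of Theorem~\ref{thm:1.1} applies to the latter verbatim and yields the stated conclusion.
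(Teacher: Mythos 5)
Your setup of the family $\scrR_{d}\to B$ and the semicontinuity dichotomy between (i) and (ii) is fine and matches the paper's reduction, as does the observation that $i^{*}H^{1}(S',\bbQ)$ is the constant summand that must be split off. The problem is that from that point on you switch to a monodromy strategy that is not the one the paper uses for Theorem~\ref{thm:1.1}, and the step that carries all the weight --- irreducibility of the monodromy representation on the $\zeta_{d}$-eigenspace of $H^{1}(C',\bbQ)/i^{*}H^{1}(S',\bbQ)$ --- is asserted rather than proved. This is not a routine Lefschetz vanishing-cycle statement: the curves $C'$ are pullbacks of hyperplane sections under the cyclic covering, not hyperplane sections of $S'$, and irreducibility (let alone bigness) of monodromy on eigenspaces attached to cyclic coverings is a delicate question; the paper itself warns that naive strengthenings fail and points to \cite{ol} for a degree-$7$ \'etale counterexample. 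Moreover, even if irreducibility over $\bbQ(\zeta_{d})$ were granted, it only shows that the commutant is a division algebra $D$ containing $\bbQ(\zeta_{d})$; your ``dimension count'' forcing $D=\bbQ(\zeta_{d})$ is not an argument --- ruling out, say, a quaternion algebra over $\bbQ(\zeta_{d})$ requires controlling the Zariski closure of the monodromy group, not merely its irreducibility.

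The paper takes an entirely different route, following Ciliberto--Van der Geer: a $\mu_{d}$-equivariant endomorphism $\varepsilon$ of $(\sR_{d})_{\eta}$ is extended to the family $\Pic^{0}_{\sY/U}$, represented by a correspondence $T$ on $\sY\times_{U}\sY$ via Proposition~\ref{prop:2.2}, and spread out over the surface to a rational map $y\mapsto[\Gamma_{y}]\in\Pic(S')$; Lemma~\ref{lem:4.1} then pins down $T_{C'}(y-\sigma(y))$ up to multiples of $y,\sigma(y),\dots,\sigma^{n-1}(y)$ and an element of $\im\bigl(i^{*}\colon\Pic(S')\to\Pic(C')\bigr)$. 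The one genuinely new ingredient for Theorem~\ref{thm:1.3} is Lemma~\ref{lem:5.1}: since $\Alb(f)$ need no longer be an isomorphism when $\sL$ is not ample, one cannot conclude that $\im(i^{*})\cap\Prym(C'/C)$ is finite, but $\im(i^{*})\cap\ker(\Jac(C')\to\Alb(S'))$ is finite, and $\sR_{d}(C',C,S')$ sits inside that kernel by construction. Your proposal never produces an analogue of this finiteness statement, which is exactly the point where the definition of $\sR_{d}$ enters; without it (or without an actual proof of big monodromy on the eigenspace, together with the reduction steps of Proposition~\ref{prop:3.3} and Lemma~\ref{lem:3.4} that you do not revisit) the argument does not close.
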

In this paper we present a complete proof for the above results, inspired by Ciliberto and Van der Geer's approach in \cite{cv}. We note that this method does not capture the étale situation, cf. \eqref{lem:3.2}, \eqref{prop:3.3} and \eqref{lem:3.4}. In addition, if we rephrase the statement for $n > 2$ by requiring simplicity instead of $\mu_{d}$-simplicity to the isogeny components, we observe that this method cannot be adopted. Namely, the abelian variety $B$ in \eqref{lem:3.4} cannot be chosen in general to be $\mu_{d}$-invariant and for this reason the last combinatorial argument in \eqref{lem:3.4} fails. Lastly, a result due to Ortega and Lange, cf. \cite{ol} may be used to find counter-example for the case the covering $f$ is étale of degree $7$. \\ \\
{\bf Acknowledgements.} The author thanks his advisor Professor Dr. Daniel Huybrechts and Dr. Gebhard Martin for helpful discussions on the topics of this note, which are parts of the author's Master thesis, as well as for corrections.
\end{section}
\begin{section}{Preliminaries}\label{sec:2} In this section, we state some well-known results, which are needed later.
\begin{proposition}\label{prop:2.1} Let $\pi\colon\sA\longrightarrow S$ be a projective abelian scheme over a Noetherian base $S$. Then, the endomorphism functor of $\sA$ over $S$ is representable by an $S$-scheme $\End_{\sA/S}$, which is a disjoint union of projective and unramified $S$-schemes.
\end{proposition}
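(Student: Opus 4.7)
The plan is to realise $\End_{\sA/S}$ as a closed subfunctor of the relative Hom scheme $\Hom_S(\sA,\sA)$, thereby inheriting a decomposition into quasi-projective $S$-schemes, and then to upgrade quasi-projective to projective and verify unramifiedness by using features specific to abelian schemes.

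First I would address representability of $\Hom_S(\sA,\sA)$. Since $\sA\to S$ is projective and flat, Grothendieck's representability theorem realises $\Hom_S(\sA,\sA)$ as an open subfunctor of $\Hilb(\sA\times_S\sA/S)$ via the graph construction; the Hilbert scheme itself splits as $\bigsqcup_P \Hilb^P(\sA\times_S\sA/S)$ into projective $S$-schemes indexed by Hilbert polynomials, so $\Hom_S(\sA,\sA)$ is a disjoint union of quasi-projective $S$-schemes. Next, by the rigidity lemma for abelian schemes a morphism $\phi\colon \sA_T\to \sA_T$ is a group homomorphism if and only if $\phi\circ e=e$, where $e\colon S\to \sA$ is the identity section. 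Hence $\End_{\sA/S}$ fits into the Cartesian square
\[\End_{\sA/S} \;=\; \Hom_S(\sA,\sA)\times_{\sA,\,\mathrm{ev}_e} S,\]
and separatedness of $\sA\to S$ makes $e$ a closed immersion, so $\End_{\sA/S}\hookrightarrow \Hom_S(\sA,\sA)$ is closed. This gives representability together with the disjoint-union structure, with each piece at least quasi-projective.

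To upgrade to projective on each component I would verify the valuative criterion of properness. Given a DVR $R$ with fraction field $K$, a morphism $\Spec R\to S$, and a $K$-point $\phi_K\colon \sA_K\to \sA_K$ of $\End_{\sA/S}$, the Néron mapping property for abelian schemes (equivalently, that an abelian scheme over a DVR is its own Néron model) extends $\phi_K$ uniquely to a homomorphism $\phi_R\colon \sA_R\to \sA_R$; since Hilbert polynomials are locally constant, the extension lies in the same connected component, and properness follows. Finally, for unramifiedness I would compute the Zariski tangent space at an arbitrary $k$-point $\phi_0\colon A_0\to A_0$: two deformations of $\phi_0$ to $k[\epsilon]$-homomorphisms of $A_0\otimes_k k[\epsilon]$ differ by a group scheme homomorphism that vanishes modulo $\epsilon$, hence factors through the infinitesimal kernel of reduction, which is a vector group isomorphic to $\bbG_a^g$. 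Since $A_0$ is proper and $\bbG_a^g$ is affine, $\Hom_{\mathrm{gp}}(A_0,\bbG_a^g)=0$, so the difference vanishes and $\Omega_{\End_{\sA/S}/S}=0$. The main technical hurdle I expect is the Néron extension step for abelian schemes; the Hilbert scheme machinery and the affine-proper vanishing then play out formally.
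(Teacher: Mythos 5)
Your argument is correct and is essentially the standard proof of this fact; the paper itself does not prove the proposition but only cites it (Nitsure's chapter in \emph{FGA Explained}, p.~133), where exactly this route is taken: graphs inside the Hilbert scheme stratified by Hilbert polynomial, rigidity to cut out homomorphisms as the closed condition $\phi\circ e=e$, the N\'eron/Weil extension property over discrete valuation rings for properness, and the vanishing of homomorphisms from a proper group scheme to the affine infinitesimal kernel for unramifiedness. No gaps worth flagging.
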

\begin{proof} This is well-known, cf. \cite[pp.\ 133]{nit}.
\end{proof}
The following proposition relates the correspondences on $C\times C$ with the endomorphisms of the Jacobian $\Jac(C)$.
\begin{proposition}\label{prop:2.2} Let $\pi\colon\sX \longrightarrow S$ be a projective smooth morphism over a Noetherian base $S$, whose fibres are geometrically integral curves. Furthermore, assume that the morphism $\pi$ admits a section, i.e. $\sX(S)\neq\emptyset$. Then, there is a natural and functorial isomorphism \[\Corr_{S}(\sX)\coloneqq\Pic(\sX\times_{S}\sX)/({\pr_{1}})^{*}\Pic(\sX)\otimes({\pr_{2}})^{*}\Pic(\sX)\cong\End_{S}(\Pic^{0}_{\sX/S}). \]
\end{proposition}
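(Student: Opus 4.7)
The plan is to construct mutually inverse maps between the two groups using the relative Picard functor together with the given section $e\colon S\to\sX$. The idea, for a line bundle $\sL$ on $\sX\times_{S}\sX$, is to rigidify $\sL$ along the section, obtain a family of degree-zero line bundles on $\sX$, and then invoke the Albanese property of the relative Jacobian.

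First, I would define $\Phi\colon\Pic(\sX\times_{S}\sX)\to\End_{S}(\Pic^{0}_{\sX/S})$ as follows. Set $M_{1}=(e\times\id)^{*}\sL$ and $M_{2}=(\id\times e)^{*}\sL$, both line bundles on $\sX$, and replace $\sL$ by its normalization $\sL_{0}=\sL\otimes\pr_{1}^{*}M_{2}^{-1}\otimes\pr_{2}^{*}M_{1}^{-1}$. Then the restriction of $\sL_{0}$ to every geometric fiber of $\pr_{1}$ has degree zero, and $\sL_{0}$ is trivialized along $e(S)\times_{S}\sX$ up to pullback from $S$. Viewing $\sL_{0}$ as a family of degree-zero line bundles on $\sX$ parametrized by $\sX$ via $\pr_{1}$, the universal property of $\Pic^{0}_{\sX/S}$ yields a classifying $S$-morphism $\psi_{\sL}\colon\sX\to\Pic^{0}_{\sX/S}$ carrying $e(S)$ to the identity section. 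By the universal property of the relative Jacobian as the Albanese of the pointed relative curve $(\sX,e)$, the morphism $\psi_{\sL}$ extends uniquely to an $S$-endomorphism $\varphi_{\sL}\in\End_{S}(\Pic^{0}_{\sX/S})$, and I would set $\Phi(\sL)=\varphi_{\sL}$.

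A direct computation shows that $\Phi$ annihilates $\pr_{1}^{*}\Pic(\sX)\otimes\pr_{2}^{*}\Pic(\sX)$: any such line bundle becomes pulled back from $S$ after normalization, hence induces the zero morphism into $\Pic^{0}_{\sX/S}$. Thus $\Phi$ descends to $\bar\Phi\colon\Corr_{S}(\sX)\to\End_{S}(\Pic^{0}_{\sX/S})$. To build an inverse, let $\sP$ denote the Poincaré line bundle on $\sX\times_{S}\Pic^{0}_{\sX/S}$ rigidified along $e$, and let $j_{e}\colon\sX\to\Pic^{0}_{\sX/S}$ be the Abel-Jacobi morphism attached to $e$. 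Given $\varphi\in\End_{S}(\Pic^{0}_{\sX/S})$, set $\Psi(\varphi)=(\id_{\sX}\times(\varphi\circ j_{e}))^{*}\sP$. The identities $\Psi\circ\bar\Phi=\id$ and $\bar\Phi\circ\Psi=\id$ reduce to the universal property of $\sP$ and to the fact that a morphism $\sX\to\Pic^{0}_{\sX/S}$ is determined by the line bundle it classifies up to pullbacks from $\sX$. Naturality in $S$ then follows from the functoriality of $\Pic$ and Albanese under base change.

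The principal obstacle is carrying out the argument in the relative setting over the Noetherian base $S$: one must justify the existence and global rigidification of the Poincaré line bundle, which is where the hypothesis $\sX(S)\neq\emptyset$ enters decisively, and one must invoke the Albanese property of $\Pic^{0}_{\sX/S}$ in families. The absolute case in which $S$ is a point is classical, following from the theorem of the square and the Abel-Jacobi theorem, and the argument above is its standard relative promotion. The seesaw principle applied fiberwise over $S$ is the key tool for verifying that $\bar\Phi$ is injective, as it shows that a normalized line bundle whose classifying morphism is zero must itself be trivial on $\sX\times_{S}\sX$.
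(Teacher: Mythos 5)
This is correct and essentially the paper's argument: both proofs use the section to identify $\Pic(\sX\times_{S}\sX)$ modulo pullbacks with $S$-morphisms $\sX\to\Pic_{\sX/S}$ (the sheaf property of the relative Picard functor) and then apply the Albanese property of $\Pic^{0}_{\sX/S}$ to the normalized classifying map, the only packaging difference being that the paper deduces bijectivity from a diagram chase on two exact rows while you exhibit injectivity by seesaw and surjectivity by an explicit inverse via the Poincar\'e bundle. One convention slip to fix: since $\sP$ on $\sX\times_{S}\Pic^{0}_{\sX/S}$ classifies bundles on the first factor, $(\id_{\sX}\times(\varphi\circ j_{e}))^{*}\sP$ is a family over $\pr_{2}$ whereas your $\Phi$ reads families off $\pr_{1}$, so as written $\bar\Phi\circ\Psi$ is the Rosati adjoint rather than the identity --- harmless for the isomorphism statement, but you should transpose one of the two constructions to make the maps literally mutually inverse.
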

\begin{proof} Consider the commutative diagram:\begin{center}\begin{tikzcd}
0 \arrow[r] & \Pic(\sX)/\pi^{*}\Pic(S) \arrow[r, "(\pr_{1})^{*}"] \arrow[d, "\cong"'] & \Pic(\sX\times_{S}\sX)/(\pr_{2})^{*}\Pic(\sX) \arrow[r, "q"] \arrow[d, "\cong"] &\Corr_{S}(\sX)  \arrow[d, dashed, "g"] \arrow[r] & 0 \\
0 \arrow[r] & \Pic_{\sX/S}(S) \arrow[r, "c\coloneqq-\circ \pi"]                                 & \Pic_{\sX/S}(\sX) \arrow[r, "d"]                                     & \End_{S}(\Pic^{0}_{\sX/S}) \arrow[r]                      & 0
\end{tikzcd}\end{center} The first row is clearly exact: Indeed, the relative Picard functor is an fppf-sheaf, cf. \cite[Thm.\ 2.5]{kl} and thus, the restriction map $(\pr_{1})^{*}$ is injective. Furthermore, the map $q$ is just the cokernel of $(\pr_{1})^{*}$. Next, we give the definition of the map $d$. Fix $x\in\sX(S)$ and let $\phi\colon\sX\longrightarrow\Pic_{\sX/S}$ be any $S$-morphism. Then, $d\phi$ is the unique endomorphism of $\Pic^{0}_{\sX/S}$, making the diagram below commutative.
\begin{center}\begin{tikzcd}
\sX \arrow[r, "can"] \arrow[d, "\phi-\phi\circ x\circ \pi"'] \arrow[rd, dashed] & \Alb_{\sX/S}\cong \Pic^{0}_{\sX/S} \arrow[d, "d\phi", dashed] \\
\Pic_{\sX/S}                                                                    & \Pic^{0}_{\sX/S} \arrow[l, hook']                            
\end{tikzcd}\end{center} Note that under our assumptions the Albanese map $can\colon\sX\longrightarrow\Alb_{\sX/S}$ exists and has the desired universal property, cf. \cite[Thm.\ 2.17]{ant}, \cite[Rem.\ 2.19]{ant} and \cite[Thm.\ 10.2]{lsc}. Moreover, the construction of the map $d$ indicates that $d$ is surjective and also that the second row in the diagram above is exact at the middle. Now, the existence of $g$ and the fact that it is an isomorphism are clear.
\end{proof}
The following proposition is well-known. 
\begin{proposition}\label{prop:2.3}Suppose that the polarized surface $( S, |\sH| )$ is not a scroll nor has rational hyperplane sections. Then, the following assertions hold true:
\begin{enumerate}
\item[$(\rm{i})$] The discriminant divisor $\sD$ is irreducible and has codimension one in $ | \sH  |$, i.e. $\sD$ is a prime divisor of $ | \sH  |$.\\
\item[$(\rm{ii})$] The general curve $[ C ]\in \sD$ is irreducible and has a single ordinary double point as its only singularity.
\end{enumerate}
\end{proposition}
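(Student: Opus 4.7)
The plan is to realise $\sD$ as the image of the incidence variety $I=\{(x,[C])\in S\times |\sH|\colon x\in \operatorname{Sing}(C)\}$ under the projection $\pr_{2}\colon I\to |\sH|$, which identifies $I$ with (a birational model of) the conormal variety of $S\hookrightarrow \bbP^{N}\coloneqq\bbP(H^{0}(S,\sH)^{\vee})$. Via the first projection $\pr_{1}\colon I\to S$, the fibre over $x\in S$ is $\bbP(H^{0}(S,\sH\otimes\mathfrak{m}_{x}^{2}))$. Very ampleness of $|\sH|$ is equivalent to separation of $2$-jets, which forces $H^{0}(S,\sH)\to H^{0}(S,\sH|_{2x})$ to be surjective; hence this fibre is a linear subspace of codimension exactly $3$, and $\pr_{1}$ realises $I$ as a $\bbP^{N-3}$-bundle over the smooth surface $S$. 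In particular, $I$ is irreducible of dimension $N-1$, and therefore its image $\sD=\pr_{2}(I)$ is automatically irreducible.

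Next, I would check that $\pr_{2}\colon I\to\sD$ is generically finite, which combined with the dimension count above gives $\dim\sD=N-1$ and completes assertion $(\mathrm{i})$. Under the embedding $S\hookrightarrow \bbP^{N}$ the discriminant $\sD$ is precisely the projective dual $S^{\vee}$, and a classical theorem on dual varieties of smooth projective surfaces says that $S^{\vee}$ is a hypersurface exactly when $S$ is not a scroll---i.e., exactly under the first of our two hypotheses. The biduality theorem together with an analysis of the tangent-cone structure then upgrades generic finiteness to birationality of $\pr_{2}$, so the general $[C]\in\sD$ has a unique singular point $x_{0}$. That this singularity is an ordinary double point reduces to a Hessian computation: the $2$-jet of a local equation for $C$ at $x_{0}$ is identified, via the restriction of a linear form $L$ cutting out the tangent hyperplane, with the composition $L\circ\mathrm{II}_{x_{0}}$ of the second fundamental form of $S\subset\bbP^{N}$, and non-degeneracy of this quadratic form for the general such $L$ is again a consequence of the non-scroll hypothesis.

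It remains to prove irreducibility of the general $[C]\in\sD$, which is where the assumption of no rational hyperplane sections intervenes and which I expect to be the main technical obstacle. Suppose to the contrary that the general singular member decomposes non-trivially as $C=C_{1}+C_{2}$ with $C_{i}\in |\sH_{i}|$ and $\sH\cong\sH_{1}\otimes\sH_{2}$; the single-node conclusion of the previous step forces $\sH_{1}\cdot\sH_{2}=1$. For the image of the addition map $|\sH_{1}|\times |\sH_{2}|\to |\sH|$ to dominate the codimension-one locus $\sD$, one needs $\dim|\sH_{1}|+\dim|\sH_{2}|\geq\dim|\sH|-1$, which combined with the general bound $h^{0}(\sH)\geq h^{0}(\sH_{1})+h^{0}(\sH_{2})-1$ forces the multiplication map $H^{0}(\sH_{1})\otimes H^{0}(\sH_{2})\to H^{0}(\sH)$ to have essentially minimal rank. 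An application of the adjunction formula to the components $C_{i}$ then pushes the smooth members of $|\sH|$ into the rational locus, contradicting the last hypothesis. The delicate point is this final combinatorial input: while the preceding dimension analysis and the Zak-type argument identifying $\sD$ with $S^{\vee}$ are essentially formal, ruling out extremal decompositions $\sH\cong\sH_{1}\otimes\sH_{2}$ requires one to exploit both polarisation hypotheses simultaneously.
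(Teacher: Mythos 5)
The paper itself gives no argument here---it simply cites \cite[Lem.\ 3.1]{cv}---so your proposal has to stand on its own. Your overall strategy (the incidence variety $I\subset S\times|\sH|$ as the conormal variety of $S\subset\bbP^{N}$, irreducibility of $\sD$ from the $\bbP^{N-3}$-bundle structure of $I$ over $S$, and reflexivity to get a unique nondegenerate contact point for the general tangent hyperplane) is the standard and correct one for part $(\rm{i})$ and for the ``single ordinary double point'' half of part $(\rm{ii})$. Two small inaccuracies there: very ampleness amounts to separation of points and tangent vectors, i.e.\ surjectivity onto $\sH\otimes\sO_{S}/\mathfrak{m}_{x}^{2}$, not ``separation of $2$-jets'' (surjectivity onto $\sH\otimes\sO_{S}/\mathfrak{m}_{x}^{3}$), though the codimension-$3$ statement you need is exactly the former; and for a smooth surface the dual variety fails to be a hypersurface only for a linearly embedded $\bbP^{2}$ (by Ein's parity theorem a dual-defective surface would have defect $2$, hence contain a $2$-plane of its own dimension), so the non-scroll hypothesis is not really what makes $\sD$ a divisor---its role, like that of the no-rational-sections hypothesis, is in part $(\rm{ii})$.

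The genuine gap is the irreducibility of the general $[C]\in\sD$, which you yourself flag as ``the main technical obstacle'' and then do not prove. Your reduction is fine up to a point: since $\sD$ is irreducible, if its general member were reducible then $\sD$ would lie in the closure of the image of a single addition map $|\sH_{1}|\times|\sH_{2}|\to|\sH|$, the one-node conclusion gives $\sH_{1}\cdot\sH_{2}=1$, and combining $\dim|\sH_{1}|+\dim|\sH_{2}|\geq N-1$ with the bilinear-map bound $h^{0}(\sH)\geq h^{0}(\sH_{1})+h^{0}(\sH_{2})-1$ (Bertini excluding the value $N$) pins down the equality $h^{0}(\sH)=h^{0}(\sH_{1})+h^{0}(\sH_{2})-1$. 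But the final step---``an application of the adjunction formula then pushes the smooth members of $|\sH|$ into the rational locus''---is an assertion, not an argument. To extract anything from the equality you must analyse the equality case of the Hopf-type bound (which forces both subsystems to be composed with a common pencil, apart from the degenerate cases $\dim|\sH_{i}|=0$), and you must then still connect that structural conclusion, and the degenerate cases, either to rationality of the general \emph{smooth} hyperplane section or to a scroll structure on $(S,|\sH|)$; neither implication is stated, let alone proved. As written, the proposal establishes $(\rm{i})$ and the nodality claim but not the irreducibility claim in $(\rm{ii})$, which is precisely the part of the proposition for which the hypothesis on rational hyperplane sections is needed.
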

\begin{proof} Cf. \cite[Lem.\ 3.1]{cv}.
\end{proof}
We close this section by introducing the $\mu_{n}$-equivariant isogeny decomposition in \eqref{thm:1.1}. Let $f\colon C'\longrightarrow C$ be a cyclic branched covering of smooth complex projective curves with $\deg(f)=n$ and let $\sigma$ stand for a generator of the Galois group of $f$. The $\mu_{n}$-action on $C'$ induces an action on $\Jac(C')$ and thus, it defines a $\bbQ$-algebra homomorphism \[\rho\colon\bbQ[\mu_{n}]\cong\bbQ[T]/(T^{n}-1)\longrightarrow\End^{0}(\Jac(C')),\ T\mapsto \sigma^{*} .\] For any divisor $d|n$, we define $\sP_{d}(C'/C)\coloneqq\ker^{0}(\Psi_{d}(\sigma^{*}))$, where $\Psi_{d}(T)\in\bbZ[T]$ is the $d$-th cyclotomic polynomial. Then, the addition map \[\mu\colon\prod_{d|n}\sP_{d}(C'/C)\longrightarrow\Jac(C')\] is a $\mu_{n}$-equivariant isogeny. Lange and Recillas \cite{lr} have stated and proved the relation between $\mathbb{Q}$-representations and the $G$-equivariant isogeny decomposition of an abelian variety with $G$-action, in terms of the finite group $G$ involved, cf. \cite[Thm.\ 2.2]{lr}. The $\mu_{n}$-equivariant isogeny decomposition of $\Jac(C')$ given above is in fact identical with the one introduced by Lange and Recillas \cite{lr}. This can be seen for example by using \cite[Rem.\ 5.5]{cr} and \cite[Cor.\ 5.7]{cr}. Moreover, we also note that the isogeny components $\sP_{d}(C'/C)$ are non-trivial as long as the genus $g(C)\geq1$, cf. \cite[Thm.\ 3.1]{lr}, \cite[Thm.\ 5.12]{roj} and \cite[Thm.\ 5.13]{roj}.
\end{section}
\begin{section}{Reduction to the generic fibre}\label{sec:3}
\par Let $S$ be a smooth projective surface over $\mathbb{C}$ with an ample line bundle $\sL$. Assume $\Delta\in|\sL^{\otimes n}|$ is smooth and consider the $n$-fold cyclic covering $f \colon {S}' \longrightarrow S$ branched along the divisor $\Delta$. Furthermore, fix a very ample complete linear system $| \sH  |$ on $S$, such that the polarized surface $( S,| \sH |  )$ is not a scroll nor has rational hyperplane sections. In this section we reduce the proof of Theorem \ref{thm:1.1} to showing that $\sP_{d}(C'_{\eta}/C_{\eta})$ is a $\mu_{d}$-simple abelian variety, where $[C_{\eta}]$ is the generic member of $|\sH|$.\par Let $x\in S$ be a closed point of $S$. We denote by $|\sH|_{x}$ the linear system of hyperplane sections in $|\sH|$ passing through $x$. In the following we impose restrictions on the point $x$, i.e. $x\in S$ will be taken from some appropriate non-empty open subset of $S$.\par Let $g\colon \sX\subset S\times  | \sH  |_{x}\longrightarrow  | \sH  |_{x}$ denote the universal family of hyperplane sections and $h \colon \sY\subset {S}'\times  |\sH  |_{x}\longrightarrow  | \sH  |_{x}$ its pullback to ${S}'$, i.e. $\sY\coloneqq\sX\times _{S} {S}'$. Note that over the non-empty open subset $U\subset|\sH|_{x}$ of smooth curves which intersect the branch locus $\Delta$ transversally both $g$ and $h$ are smooth families of curves having a section. The latter allows us to consider their families of Jacobians over $U$, which we denote by $p \colon \Pic^{0}_{\sX/U}\longrightarrow U$ and $q \colon \Pic^{0}_{\sY/U}\longrightarrow U$, respectively.\par A generator $\sigma\colon S'\longrightarrow S'$ of the Galois group of the covering $f$ induces an automorphism of $\sY$ over $U$ and thus, an automorphism $\sigma^{*}\colon\Pic^{0}_{\sY/U}\longrightarrow\Pic^{0}_{\sY/U}$. We define \[\sP_{d}\coloneqq\ker^{0}(\Psi_{d}(\sigma^{*}))\text{ for any divisor}\ d|n.\] Then, $\varphi_{d}\colon\sP_{d}\longrightarrow U$ is an abelian fibration with fibres ${(\sP_{d})}_{[C]}=\sP_{d}(C'/C)$ for $[C]\in U$.
\par As a first step we use the representability of the endomorphism functor of abelian schemes cf. \eqref{prop:2.1} to reduce the proof of Theorem \ref{thm:1.1} to showing that $\End_{\mu_{d}}((\sP_{d})_{\bar{\eta}})\cong\bbZ[\zeta_{d}]$, where $\bar{\eta}$ is a fixed geometric generic point of $|\sH|_{x}$. The proof of this is standard and so we omit it. 
\begin{lemma}\label{lem:3.1}  Assume that $\End_{\mu_{d}}((\sP_{d})_{\bar{\eta}})\cong \bbZ[\zeta_{d}]$. Then, for the very general member $[C]\in U$, one has that $\End_{\mu_{d}} ((\sP_{d})_{[C]} )\cong \bbZ[\zeta_{d}]$.
\end{lemma}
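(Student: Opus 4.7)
The plan is to combine the representability result of Proposition \ref{prop:2.1} with a standard countable-avoidance argument for very general loci. First I would apply Proposition \ref{prop:2.1} to the projective abelian scheme $\sP_{d}\longrightarrow U$: the endomorphism scheme $\End_{\sP_{d}/U}$ is a disjoint union of projective unramified $U$-schemes. The $\mu_{d}$-equivariance condition $\phi\circ\sigma^{*}=\sigma^{*}\circ\phi$ is closed, so the $\mu_{d}$-equivariant endomorphism scheme
\[E\coloneqq\End_{\mu_{d}}(\sP_{d}/U)=\bigsqcup_{i\in I}T_{i}\]
is again a disjoint union of projective unramified $U$-schemes, and each component $T_{i}\longrightarrow U$, being proper and unramified, is finite.

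Next I would analyse the components individually. By hypothesis $E_{\bar{\eta}}\cong\bbZ[\zeta_{d}]$ is countably infinite while each finite $T_{i}$ contributes only finitely many points over $\bar{\eta}$, so $I$ is countable. Split $I=I^{\mathrm{dom}}\sqcup I^{\mathrm{ndom}}$ according to whether $T_{i}\longrightarrow U$ is dominant. For $i\in I^{\mathrm{ndom}}$ the image $Z_{i}\subsetneq U$ is proper closed; for $i\in I^{\mathrm{dom}}$, since $T_{i}\longrightarrow U$ is finite and generically étale (characteristic zero), it is étale outside a proper closed subset $W_{i}\subsetneq U$. Set
\[V\coloneqq U\setminus\Bigl(\bigcup_{i\in I^{\mathrm{ndom}}}Z_{i}\cup\bigcup_{i\in I^{\mathrm{dom}}}W_{i}\Bigr),\]
the complement of a countable union of proper closed subsets, and hence a very general subset of $U$.

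For $[C]\in V$ no non-dominant component contributes to $E_{[C]}$, and every dominant $T_{i}$ is finite étale in a neighbourhood of $[C]$, so the fibres $(T_{i})_{[C]}$ and $(T_{i})_{\bar{\eta}}$ have the same cardinality; tracking points along an étale path from $[C]$ to $\bar{\eta}$ yields a bijection $\bbZ[\zeta_{d}]=E_{\bar{\eta}}\longrightarrow E_{[C]}$. The one remaining step, and the main technical point, is to promote this to a \emph{ring} isomorphism: since addition and composition of endomorphisms are given by $U$-morphisms $E\times_{U}E\longrightarrow E$, they commute with specialisation, and the bijection therefore respects the ring structure, giving $\End_{\mu_{d}}((\sP_{d})_{[C]})\cong\bbZ[\zeta_{d}]$. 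The genuine subtleties one must settle are that $\mu_{d}$-equivariance really does cut out a closed subscheme of $\End_{\sP_{d}/U}$ and that the scheme structure on $E$ encodes the ring structure on its fibres; both are standard, and once granted, the rest is a routine application of representability plus countable avoidance.
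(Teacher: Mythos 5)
The paper offers no proof of Lemma \ref{lem:3.1} (it is declared ``standard'' and omitted), so your proposal can only be judged on its own terms; it is indeed the standard representability-plus-countable-avoidance argument that the author has in mind, and the overall structure is sound. There is, however, one step whose justification as written does not work: the countability of the index set $I$. Your argument --- that $E_{\bar\eta}\cong\bbZ[\zeta_d]$ is countable and each $T_i$ contributes finitely many points over $\bar\eta$ --- only bounds the number of \emph{dominant} components; the non-dominant components are invisible over $\bar\eta$ and are a priori unconstrained by this count, yet it is precisely the union $\bigcup_{i\in I^{\mathrm{ndom}}}Z_i$ that must be a \emph{countable} union of proper closed subsets for the very-general conclusion. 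The correct source of countability is the construction of $\End_{\sP_d/U}$ inside the Hilbert scheme of $\sP_d\times_U\sP_d$ (via graphs), whose components are indexed by the countably many Hilbert polynomials; with that reference inserted, the step is fine.

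Two further remarks on the last paragraph. The phrase ``tracking points along an \'etale path'' and the subsequent claim that the resulting bijection respects the ring structure can be made precise, but in this situation they can also be avoided entirely: the subring $\bbZ[\zeta_d]=\bbZ[\sigma^{*}]\subset\End_{\mu_d}(\sP_d/U)$ consists of \emph{global} sections of $E$ over $U$, so the specialisation map $\End_{\mu_d}((\sP_d)_{\bar\eta})\longrightarrow\End_{\mu_d}((\sP_d)_{[C]})$ restricted to it is an injective ring homomorphism with no monodromy issues, and the only content is surjectivity. For that, given $\phi\in\End_{\mu_d}((\sP_d)_{[C]})$ with $[C]\in V$, the component $T_i$ through the corresponding point dominates $U$ and is finite \'etale near $[C]$; its generic fibre gives an element of $\End_{\mu_d}((\sP_d)_{\bar\eta})=\bbZ[\zeta_d]$, i.e.\ some $[n]=\sum n_j(\sigma^{*})^{j}$, and since the irreducible component of $T_i$ over $U\setminus W_i$ containing $\phi$ coincides with the image of the section $[n]$, one gets $\phi=[n]_{[C]}\in\bbZ[\zeta_d]$ on the nose. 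This closes the argument without appealing to an abstract bijection of countably infinite sets (a comparison which, taken fibre-by-fibre, is fine for the finite sets $(T_i)_{[C]}$, but carries no information at the level of the full fibres $E_{[C]}$ and $E_{\bar\eta}$).
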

Let $[C]\in|\sH|_{x}$ be an irreducible member with a single ordinary double point as its only singularity and intersecting the branch locus $\Delta$ transversally. Then, $C'\coloneqq f^{-1}(C)$ is irreducible and has $n$ ordinary double points as its only singularities. In this case the group variety $\sP_{d}(C'/C)$ is semi-abelian. In particular, the result is the following:
\begin{lemma}\label{lem:3.2} For an irreducible member $[C]\in|\sH|_{x}$ with a single ordinary double point as its only singularity and intersecting the branch locus $\Delta$ transversally, there is an exact sequence:
\begin{center}\begin{tikzcd}
0 \arrow[r] & \mathbb{G}_{m}^{\varphi(d)} \arrow[r, hook] & {\sP_{d}(C'/C)} \arrow[r, two heads] & \sP_{d}(\tilde{C'}/\tilde{C}) \arrow[r] & 0,
\end{tikzcd}
\end{center}
where $\nu\colon\tilde{C}\longrightarrow C$ is the normalisation map and $\varphi(d)$ is the Euler's totient function.
\end{lemma}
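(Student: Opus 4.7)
The plan is to combine the Rosenlicht--Serre structure theorem for generalized Jacobians of nodal curves with the semisimplicity of the $\mu_{n}$-action in characteristic zero. First, since $C\cap\Delta$ is transversal, the node $p$ of $C$ lies in its smooth locus and hence off $\Delta$. As $f$ is étale away from $\Delta$, the preimage $f^{-1}(p)=\{p_{1},\ldots,p_{n}\}$ consists of $n$ distinct points freely permuted by $\mu_{n}=\langle\sigma\rangle$, so that $C'$ acquires precisely $n$ ordinary double points at the $p_{i}$.

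I next invoke the $\mu_{n}$-equivariant short exact sequence
\[0\longrightarrow\mathbb{G}_{m}^{n}\longrightarrow\Jac(C')\longrightarrow\Jac(\tilde{C'})\longrightarrow0,\]
in which the second map is the pullback along $\tilde{C'}\to C'$, the factors of $\mathbb{G}_{m}^{n}$ are indexed by the $n$ nodes of $C'$, and $\sigma^{*}$ permutes them cyclically. Over $\bbQ$ the associated cocharacter module is the regular representation $\bbQ[\mu_{n}]\cong\bigoplus_{d|n}\bbQ[T]/\Psi_{d}(T)$, so its $\Psi_{d}$-isotypic subtorus has dimension $\varphi(d)$, i.e.\ equals $\mathbb{G}_{m}^{\varphi(d)}$. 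Moreover, $\tilde{C'}\to\tilde{C}$ is still a cyclic $n$-fold cover under the induced $\sigma$-action, so $\sP_{d}(\tilde{C'}/\tilde{C})=\ker^{0}(\Psi_{d}(\sigma^{*}))\subset\Jac(\tilde{C'})$ is defined in exactly the same way as for $C'/C$.

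To finish, I apply $\ker^{0}(\Psi_{d}(\sigma^{*}))$ termwise to the displayed sequence. The intersection $\sP_{d}(C'/C)\cap\mathbb{G}_{m}^{n}$ has identity component equal to the $\Psi_{d}$-isotypic subtorus $\mathbb{G}_{m}^{\varphi(d)}$ identified above, which is precisely the toric part of the semi-abelian variety $\sP_{d}(C'/C)$. Combined with a dimension count using the isogeny decomposition $\Jac(C')\sim\bigoplus_{d|n}\sP_{d}(C'/C)$ and the connectedness of $\sP_{d}(\tilde{C'}/\tilde{C})$, this forces the image of $\sP_{d}(C'/C)$ in $\Jac(\tilde{C'})$ to be exactly $\sP_{d}(\tilde{C'}/\tilde{C})$, yielding the claimed short exact sequence.

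The genuine subtlety is the compatibility of $\ker^{0}(\Psi_{d}(\sigma^{*}))$ with the semi-abelian extension itself. This is underwritten by the semisimplicity of $\bbQ[\mu_{n}]$ in characteristic zero: the orthogonal idempotents $e_{d}\in\bbQ[\mu_{n}]$ split the $\mu_{n}$-equivariant isogeny category of semi-abelian varieties into $\Psi_{d}$-isotypic summands, and this sets up the $d$-by-$d$ decomposition of the sequence before the final dimension-count sharpens it from an isogeny to an honest equality of subgroup schemes.
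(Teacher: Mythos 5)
Your proof is correct, and it reaches the kernel the same way the paper does: the nodes of $C'$ form a single free $\mu_{n}$-orbit, so the torus $\ker(\Jac(C')\to\Jac(\tilde{C'}))\cong\mathbb{G}_{m}^{n}$ has cocharacter module the regular representation $\bbZ[\mu_{n}]$, whose $\Psi_{d}$-part is the $\varphi(d)$-dimensional subtorus that the paper simply records in the bottom row of its diagram. (One small point you both leave implicit: for the kernel of $\sP_{d}(C'/C)\to\sP_{d}(\tilde{C'}/\tilde{C})$ to be exactly a torus rather than a torus times a finite group, observe that $\bbZ[T]/(T^{n}-1,\Psi_{d}(T))\cong\bbZ[\zeta_{d}]$ is torsion-free, so $\ker(\Psi_{d}(\sigma^{*}))\cap\mathbb{G}_{m}^{n}$ is in fact connected.) Where you genuinely diverge is the surjectivity step. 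The paper works one divisor $d$ at a time: it applies the snake lemma to the map from the generalized-Jacobian sequence to the sequence of images of $\Psi_{d}(\sigma^{*})$, getting $\ker(\Psi_{d}(\sigma^{*}))\to\ker(\Psi_{d}(\tilde{\sigma}^{*}))\to\coker(\gamma)\to 0$ with $\coker(\gamma)$ affine, and concludes because an affine quotient of a subgroup of the abelian variety $\Pic^{0}(\tilde{C'})$ is finite. You instead sum over all $d\mid n$: equivariance puts each image inside $\sP_{d}(\tilde{C'}/\tilde{C})$, the image has dimension $\dim\sP_{d}(C'/C)-\varphi(d)$, and since $\sum_{d}\varphi(d)=n=\dim\Jac(C')-\dim\Jac(\tilde{C'})$ the termwise inequalities against $\dim\sP_{d}(\tilde{C'}/\tilde{C})$ are forced to be equalities. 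Both are sound; the paper's argument is local in $d$ and does not need control of the other isogeny factors, while yours is more elementary but leans on the full isogeny decompositions of both $\Jac(C')$ and $\Jac(\tilde{C'})$ --- so you should note explicitly that the idempotent splitting $\prod_{d}\sP_{d}(C'/C)\sim\Jac(C')$ is still valid for the semi-abelian $\Jac(C')$ (it is, since multiplication by $N$ remains an isogeny of semi-abelian varieties).
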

\begin{proof}We have a commutative diagram\begin{center}\begin{tikzcd}
\tilde{C'} \arrow[r, "\nu'"] \arrow[d, "\tilde{f}"] & C' \arrow[d, "f"] \\
\tilde{C} \arrow[r, "\nu"]                          & C,                
\end{tikzcd}
\end{center}
where $\tilde{f}$ is the cyclic covering branched along the divisor $\nu^{*}\Delta|_{C}\in|\nu^{*}\sL|_{C}^{\otimes n}|$ and $\nu'$ is the normalisation of $C'$. Fix a generator $\sigma$ of $\Aut(C'/C)$ and let $\tilde{\sigma}$ be the corresponding generator of $\Aut(\tilde{C'}/\tilde{C})$, i.e. the one for which the diagram below commutes \begin{center}\begin{tikzcd}
\tilde{C'} \arrow[r, "\nu'"] \arrow[d, "\tilde{\sigma}"] & C' \arrow[d, "\sigma"] \\
\tilde{C'} \arrow[r, "\nu'"]                            & C'.                  
\end{tikzcd}
\end{center}
Let $\{y,\sigma(y),\sigma^{2}(y),\ldots,\sigma^{n-1}(y)\}$ be the set of ordinary double points of $C'$. Then, we find a commutative diagram with exact rows and columns \begin{center}\begin{tikzcd}
0 \arrow[r] & \ker(\alpha) \arrow[r, hook]                                                                               & \Psi_{d}(\sigma^{*})\Pic^{0}(C') \arrow[r, "\alpha", two heads]      & \Psi_{d}(\tilde{\sigma}^{*})\Pic^{0}(\tilde{C'}) \arrow[r] & 0 \\
0 \arrow[r] & \mathbb{C}^{*}_{y}\times\dots\times\mathbb{C}^{*}_{\sigma^{n-1}(y)} \arrow[r, hook] \arrow[u, "\gamma"]      & \Pic^{0}(C') \arrow[r, "\nu'^{*}", two heads] \arrow[u, two heads] & \Pic^{0}(\tilde{C'}) \arrow[r] \arrow[u, two heads]      & 0 \\
0 \arrow[r] & \mathbb{C}^{*}_{y}\times\dots\times\mathbb{C}^{*}_{\sigma^{\varphi(d)-1}(y)} \arrow[u, hook] \arrow[r, hook] & \ker(\Psi_{d}(\sigma^{*})) \arrow[u, hook] \arrow[r, "\beta"]        & \ker(\Psi_{d}(\tilde{\sigma}^{*})) \arrow[u, hook]  .       &  
\end{tikzcd}\end{center} We show that $\beta$ induces a surjection $\sP_{d}(C'/C)=\ker^{0}(\Psi_{d}(\sigma^{*}))\onto \sP_{d}(\tilde{C'}/\tilde{C})=\ker^{0}(\Psi_{d}(\tilde{\sigma}^{*}))$. Indeed, by Snake lemma we have the exact sequence \[\ker(\Psi_{d}(\sigma^{*}))\longrightarrow\ker(\Psi_{d}(\tilde{\sigma}^{*}))\longrightarrow \coker(\gamma)\longrightarrow 0.\] Note that $\coker(\gamma)$ is an affine algebraic group, as it is the quotient of a commutative affine algebraic group by an algebraic subgroup. Especially, by \cite[Cor.\ 12.67]{gw} and the exactness of the above sequence, we find that $\dim\coker(\gamma)=0$. The latter provides the surjectivity of the map $\ker^{0}(\Psi_{d}(\sigma^{*}))\longrightarrow \sP_{d}(\tilde{C'}/\tilde{C})=\ker^{0}(\Psi_{d}(\tilde{\sigma}^{*}))$, as claimed.
\end{proof}
We are now in the position to prove the following:
\begin{proposition}\label{prop:3.3} The abelian variety $(\sP_{d})_{\bar{\eta}}$ is $\mu_{d}$-simple if and only if $\End_{\mu_{d}}((\sP_{d})_{\bar{\eta}})\cong\bbZ[\zeta_{d}]$.
\end{proposition}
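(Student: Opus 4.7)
The plan is to use the canonical ring injection $\bbZ[\zeta_d]\hookrightarrow R\coloneqq\End_{\mu_d}((\sP_d)_{\bar\eta})$ sending $\zeta_d\mapsto\sigma^*$, which exists because $(\sP_d)_{\bar\eta}=\ker^{0}(\Psi_d(\sigma^*))$ is positive-dimensional and $\Psi_d$ is irreducible over $\bbQ$. For the direction $(\Leftarrow)$, if $R\cong\bbZ[\zeta_d]$ then $D\coloneqq R\otimes_{\bbZ}\bbQ\cong\bbQ(\zeta_d)$ is a field. Were $(\sP_d)_{\bar\eta}$ not $\mu_d$-simple, the $\mu_d$-equivariant Poincar\'e reducibility theorem would produce a pair of complementary non-trivial idempotents in $D$, contradicting that $D$ is a field.

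For the harder direction $(\Rightarrow)$, I would exploit the semi-abelian degeneration provided by \eqref{lem:3.2}. Assuming $(\sP_d)_{\bar\eta}$ is $\mu_d$-simple, $D$ is a division algebra with $\bbQ(\zeta_d)$ in its centre. Choose a nodal member $[C_0]\in\sD\cap|\sH|_x$ as in \eqref{prop:2.3} and a smooth one-parameter slice of $|\sH|_x$ through $[C_0]$. After base-changing to the associated discrete valuation ring $\sO$, the fibration $\sP_d$ extends over $\Spec\sO$ to a semi-abelian group scheme whose special fibre is precisely the semi-abelian variety described in \eqref{lem:3.2}, with toric part $T\cong\bbG_m^{\varphi(d)}$. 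The character lattice of $T$, equipped with its $\mu_d$-action, is naturally identified with $\bbZ[\zeta_d]$ (with $\sigma^*$ acting as multiplication by $\zeta_d$), and consequently $\End_{\mu_d}(T)\cong\bbZ[\zeta_d]$.

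By the rigidity of morphisms between semi-abelian schemes, every $\mu_d$-equivariant endomorphism of the generic fibre extends uniquely to the semi-abelian model over $\Spec\sO$; restricting to $T$ on the special fibre then produces a ring homomorphism $\rho\colon R\to\bbZ[\zeta_d]$. By construction the composition $\bbZ[\zeta_d]\hookrightarrow R\xrightarrow{\rho}\bbZ[\zeta_d]$ is the identity, so $\rho\otimes\bbQ\colon D\to\bbQ(\zeta_d)$ is a non-zero $\bbQ$-algebra homomorphism out of a division algebra, hence injective. Comparing $\bbQ$-dimensions yields $D=\bbQ(\zeta_d)$, and then $R$ is an order in $\bbQ(\zeta_d)$ containing the maximal order $\bbZ[\zeta_d]$, forcing $R=\bbZ[\zeta_d]$.

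The main obstacle will be the technical verification that $\mu_d$-equivariant endomorphisms really do extend to the semi-abelian model, together with the identification of the $\mu_d$-module structure on the character lattice of $T$; both are essentially encoded in the proof of \eqref{lem:3.2}, but require a careful analysis of the $\sigma$-orbit structure on the $n$ nodes of $C_0'=f^{-1}(C_0)$ and their fate under the normalisation.
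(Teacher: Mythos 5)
Your proposal is correct and rests on the same central mechanism as the paper's proof: degenerate along a pencil to a nodal member, use the semi-abelian reduction of \eqref{lem:3.2} to extend endomorphisms over a discrete valuation ring via the N\'eron mapping property, and restrict to the toric part $\bbG_m^{\varphi(d)}$, whose character lattice is $\bbZ[\zeta_d]$ with $\sigma^*$ acting by a primitive $d$-th root of unity, so that $\End_{\mu_d}(\bbG_m^{\varphi(d)})\cong\bbZ[\zeta_d]$. Where you genuinely diverge is in the logical organisation of the hard direction. The paper argues by contradiction at the level of the pencil: it first builds the reduced scheme of non-isogeny equivariant endomorphisms (Step 1), shows the very general fibre is $\mu_d$-absolutely simple (Step 2), and then runs a combinatorial argument with the loci $Z_n=\pi(\im([n])\cap Z)$ to choose a pencil on which \emph{both} the $\mu_d$-simplicity and an endomorphism outside $\bbZ[\zeta_d]$ survive specialisation, before applying $\rho$. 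Your argument is direct and dispenses with all of that: you map the entire ring $R=\End_{\mu_d}((\sP_d)_{\bar\eta})$ into $\bbZ[\zeta_d]$ and observe that injectivity is automatic because $R\otimes\bbQ$ is a division algebra (by the hypothesis of $\mu_d$-simplicity at $\bar\eta$) and the composite is unital, so its kernel, a two-sided ideal, must vanish. This is the key simplification: you never need the special or pencil-generic fibre to be $\mu_d$-simple, only the generic one, so Steps 1--2 and the $\Gamma$-argument become unnecessary. Two small caveats. First, the composite $\bbZ[\zeta_d]\hookrightarrow R\to\bbZ[\zeta_d]$ need not literally be the identity (it is the Galois automorphism determined by how $\sigma$ permutes the nodes), but it is a ring injection, which is all your dimension count requires. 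Second, the step you flag as the ``main obstacle'' is exactly where the remaining work lies and matches the paper's Step 3: the endomorphisms of $(\sP_d)_{\bar\eta}$ are a priori defined only over a finite extension $L\supset\kappa(\eta)$, so you must spread them out over an \'etale cover of an open subset of $U$, choose the one-parameter slice through the nodal member so that it meets this open set, and pass to a compactified component of the preimage before taking the local ring at a point over $[C_0]$; with that in place your argument closes correctly.
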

\begin{proof} The one direction is clear: Indeed, if $\End_{\mu_{d}}((\sP_{d})_{\bar{\eta}})\cong\bbZ[\zeta_{d}]$, then every non-zero $\mu_{d}$-equivariant endomorphism of $(\sP_{d})_{\bar{\eta}}$ is an isogeny and thus, $(\sP_{d})_{\bar{\eta}}$ is a $\mu_{d}$-simple abelian variety. Conversely, assume that $(\sP_{d})_{\bar{\eta}}$ is $\mu_{d}$-simple. We divide the proof into steps.
\begin{step}\label{1} There is a closed subscheme $\End^{\mu_{d}}_{\sP_{d}/U}(0)\subset \End^{\mu_{d}}_{\sP_{d}/U}$ whose points parametrise the $\mu_{d}$-equivariant endomorphisms of $\sP_{d}$, which are not isogenies, i.e. the ones, which are of degree $0$.
\begin{proof}[Proof of Step 1] Observe that the functor of $\mu_{d}$-equivariant endomorphisms of $\sP_{d}$ denoted by $\End^{\mu_{d}}_{\sP_{d}/U}$ is representable by a closed subscheme of $\End_{\sP_{d}/U}$, since the equivariant condition is closed. It follows that we have a universal endomorphism $\alpha$, such that every other $\mu_{d}$-equivariant endomorphism of $\sP_{d}$ over some scheme $T$ is obtained by pulling-back $\alpha$ along a morphism $T\longrightarrow \End^{\mu_{d}}_{\sP_{d}/U}$. By \cite[Prop.\ 12.93]{gw} the set \[\sV\coloneqq\{ x\in\End^{\mu_{d}}_{\sP_{d}/U}| \ \alpha_{x}\coloneqq \alpha\times \id_{\kappa ( x  )} \ \text{is an isogeny} \}\] is open. Therefore, $\End^{\mu_{d}}_{\sP_{d}/U}(0)\coloneqq \End^{\mu_{d}}_{\sP_{d}/U}\setminus \sV$ with the reduced induced closed subscheme structure has the desired property.
\end{proof}
\end{step}
\begin{step}\label{2} The fibre $(\sP_{d})_{[C]}$ for the very general member $[C]\in|\sH|_{x}$ is a $\mu_{d}$-absolutely simple abelian variety.
\begin{proof}[Proof of Step 2] Since $(\sP_{d})_{\bar{\eta}}$ is a $\mu_{d}$-simple abelian variety, we can determine countably many non-empty open subsets $U_{i}\subset U$, such that the $U$-scheme $\End^{\mu_{d}}_{\sP_{d}/U}(0)$ has (geometrically) connected fibres for all points lying in the intersection of the $U_{i}$'s, cf. \cite[\href{https://stacks.math.columbia.edu/tag/055C}{Tag 055C}]{sta}.
\end{proof}
\end{step}
Pick a Lefschetz pencil $(C_{t})_{t\in\bbP^{1}}\subset|\sH|_{x}$. We may assume that all its singular members are irreducible and intersect the branch locus $\Delta$ transversally, cf. \eqref{prop:2.3}. 
\begin{step}\label{3} Given a Lefschetz pencil $(C_{t})_{t\in\mathbb{P}^{1}}$ as above, we construct a homomorphism:  \[\rho \colon \End_{\mu_{d}}((\sP_{d})_{\bar{\mu}}) \longrightarrow \End(\mathbb{G}^{\varphi(d)}_{m}),\] where $\bar{\mu}$ is a fixed geometric generic point of $\mathbb{P}^{1}$.
\begin{proof}[Proof of Step 3] Since the endomorphism ring of any abelian variety is finitely generated, cf. \cite[Thm.\ 12.5]{mil1}, we find a finite field extension $L\supset\kappa(\mu)$, such that every endomorphism of $\sP_{d}$ over $\kappa(\bar{\mu})$ is defined over $L$, i.e. $\End((\sP_{d})_{\bar{\mu}})=\End((\sP_{d})_{L})$. Consider the smooth projective model $E$ of $L$ together with the morphism $E\longrightarrow\bbP^{1}$ induced by this field extension and fix a closed point $y\in E$ lying over a point of the pencil that corresponds to a nodal curve. The map $\rho \colon \End_{\mu_{d}}((\sP_{d})_{\bar{\mu}}) \longrightarrow \End(\mathbb{G}^{\varphi(d)}_{m})$ is constructed as follows: Let $f\in\End_{\mu_{d}}((\sP_{d})_{L})$. Then, $f$ extends to an endomorphism over the local ring $R$ of $E$ at the point $y$, cf. \cite[Prop.\ 7.4.3]{blr}. The restriction of the first projection of $\sP_{d} \times _{R}\sP_{d}$ to the graph of $f$ is an isomorphism. We set $\alpha\coloneqq\pr_{1}|_{(\Gamma_{f})_{y}}$. By pulling back $\alpha$ along $\bbG^{\varphi(d)}_{m}\hookrightarrow{(\sP_{d})}_{y}$, we get an isomorphism $\alpha\colon\alpha^{-1}(\bbG^{\varphi(d)}_{m})\longrightarrow\bbG^{\varphi(d)}_{m}$. We claim that $\alpha^{-1}$ is the graph of a homomorphism $\bbG^{\varphi(d)}_{m}\longrightarrow\bbG^{\varphi(d)}_{m}$. Indeed, it suffices to show that $\pr_{2}(\alpha^{-1}(\bbG^{\varphi(d)}_{m}))\subset\bbG^{\varphi(d)}_{m}$. To see this, observe that the composite \[\bbG^{\varphi(d)}_{m}\overset{\cong}\longrightarrow \alpha^{-1}(\bbG^{\varphi(d)}_{m})\subset (\Gamma _{f})_{y}\overset{\pr_{2}}{\longrightarrow}(\sP_{d})_{y}\longrightarrow \sP_{d}(\tilde{C'_{y}}/\tilde{C_{y}})\] is the zero map by \cite[Cor.\ 3.9]{mil1} and hence, $\pr_{2}|_{\bbG^{\varphi(d)}_{m}}$ factors through the kernel of $(\sP_{d})_{y}\longrightarrow \sP_{d}(\tilde{C'_{y}}/\tilde{C_{y}})$ which is $\bbG^{\varphi(d)}_{m}$. Finally, we define $\rho(f)$ to be this endomorphism of $\bbG^{\varphi(d)}_{m}$. One checks that $\rho$ is a homomorphism of rings.
\end{proof}
\end{step}
\begin{proof}[Conclusion]Eventually, we are in the position to complete the proof. Suppose $\End_{\mu_{d}}((\sP_{d})_{\bar{\eta}})\neq \bbZ[\zeta_{d}]$ and choose a $\mu_{d}$-equivariant endomorphism $f$ not in $\bbZ[\zeta_{d}]$. The endomorphism $f$ can be described as a $\kappa(\bar{\eta})$-point of $\End^{\mu_{d}}_{\sP_{d}/U}$ and we let $Z\subset \End^{\mu_{d}}_{\sP_{d}/U}$ be the irreducible component containing this point.  Then, the generic point $\theta\in Z$ corresponds to a $\mu_{d}$-equivariant endomorphism not in $\bbZ[\zeta_{d}]$. Consider the finite set \[\Gamma\coloneqq \{ n\coloneqq(n_{0},n_{1},\ldots,n_{\varphi(d)-1})\in \bbZ^{\varphi(d)}\ | \ \im ([n]\footnote{$[n]\coloneqq n_{0}\id+n_{1}\sigma^{*}+n_{2}(\sigma^{*})^{2}+\dots+n_{\varphi(d)-1}(\sigma^{*})^{\varphi(d)-1}$})\cap Z\neq \emptyset \}.\] Each $\im ( [n]  )\cap Z$ is a proper closed subset of $Z$. Setting \[Z_{n}\coloneqq\pi(\im( [n]  )\cap Z),\] for $n\in \Gamma$, we get finitely many nowhere dense closed subsets of $U$, such that for every point $u\in U\setminus \bigcup_{n\in\Gamma }Z_{n}$ the fibre $\pi^{-1}(u)$ contains a point, which is not in $\bbZ[\zeta_{d}]$. We can choose a Lefschetz pencil as above, such that $(\sP_{d})_{\bar{\mu}}$ is $\mu_{d}$-simple, cf. Step \ref{2} and $\End_{\mu_{d}}((\sP_{d})_{\bar{\mu}})\neq \bbZ[\zeta_{d}]$. By Step \ref{3} this leads to a contradiction.
\end{proof}
\end{proof}
The next lemma consists of the final reduction step.
\begin{lemma}\label{lem:3.4} The abelian variety $(\sP_{d})_{\eta}$ is $\mu_{d}$-simple if and only if it is $\mu_{d}$-absolutely simple.
\end{lemma}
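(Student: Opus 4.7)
The forward direction is clear: a $\mu_{d}$-equivariant abelian subvariety of $(\sP_{d})_{\eta}$ base-changes to one of $(\sP_{d})_{\bar\eta}$. For the converse I argue by contradiction. Assume $(\sP_{d})_{\eta}$ is $\mu_{d}$-simple and choose a proper non-trivial $\mu_{d}$-invariant abelian subvariety $B\subset(\sP_{d})_{\bar\eta}$, defined over a finite Galois extension $L$ of $\kappa(\eta)$ with Galois group $G$. Since $\mu_{d}$ is defined over $\kappa(\eta)$, its action commutes with that of $G$, so each translate $\sigma(B)$ is again $\mu_{d}$-invariant; the polarization coming from $\Jac(C'_{\bar\eta})$ is likewise $\mu_{d}$- and $G$-invariant, which makes Poincaré reducibility available in the $\mu_{d}$-equivariant category.

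The overall strategy is Galois descent: combine $B$ with its conjugates to produce a proper $\mu_{d}$- and $G$-invariant abelian subvariety of $(\sP_{d})_{L}$, which then descends to $(\sP_{d})_{\eta}$ and contradicts $\mu_{d}$-simplicity. The sum $\sum_{\sigma\in G}\sigma(B)$ and the neutral component $\bigl(\bigcap_{\sigma\in G}\sigma(B)\bigr)^{0}$ are immediate $G$- and $\mu_{d}$-invariant candidates, so they descend and by simplicity each equals $0$ or $(\sP_{d})_{L}$. The only delicate situation is therefore $\sum_{\sigma}\sigma(B)=(\sP_{d})_{L}$ together with $\bigl(\bigcap_{\sigma}\sigma(B)\bigr)^{0}=0$.

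To address this residual case I apply $\mu_{d}$-equivariant Poincaré reducibility and write $(\sP_{d})_{\bar\eta}\sim\prod_{i=1}^{r}A_{i}^{n_{i}}$ with $A_{i}$ pairwise non-$\mu_{d}$-isogenous and $\mu_{d}$-absolutely simple. The isotypic components $V_{i}=A_{i}^{n_{i}}$ are canonical $\mu_{d}$-invariant subvarieties, so $G$ permutes them; the sum over a $G$-orbit of $V_{i}$'s is $\mu_{d}$- and $G$-invariant, hence descends, and by $\mu_{d}$-simplicity must equal $(\sP_{d})_{L}$, so $G$ acts transitively on $\{V_{1},\ldots,V_{r}\}$. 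In particular all $A_{i}$ are Galois-conjugate and share a common multiplicity $n_{i}=n$. If $n\geq 2$, fix a $\mu_{d}$-invariant copy $W\cong A_{1}$ inside $V_{1}$ (available because $A_{1}$ is $\mu_{d}$-absolutely simple and $V_{1}\cong A_{1}^{n}$); its Galois orbit sits in pairwise distinct isotypic components, so the orbit sum has dimension $r\dim A_{1}<rn\dim A_{1}=\dim(\sP_{d})_{\bar\eta}$, giving a proper descending subvariety and the desired contradiction.

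The hardest case is $n=1$, namely $(\sP_{d})_{\bar\eta}\sim\prod_{i=1}^{r}A_{i}$ with $G$ cyclically permuting $r\geq 2$ non-isogenous $\mu_{d}$-absolutely simple factors --- a Weil-restriction-like configuration. This is exactly where the $\mu_{d}$-invariance of $B$ is indispensable, as flagged in the introduction: any $\mu_{d}$-invariant subvariety of $(\sP_{d})_{\bar\eta}$ must be a partial sum $\sum_{i\in S}A_{i}$ for some $S\subset\{1,\ldots,r\}$ by the $\mu_{d}$-absolute simplicity of each $A_{i}$ together with their pairwise non-isogeny, and then the compatibility of the $\bbZ[\zeta_{d}]$-action with the cyclic $G$-permutation of the factors imposes a combinatorial restriction on $S$ that forces $r=1$. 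This is the combinatorial step that breaks down as soon as one drops the $\mu_{d}$-invariance of $B$ --- precisely the limitation of the present method noted in the remarks following Theorem \ref{thm:1.3}.
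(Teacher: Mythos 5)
Your reduction to the case where $G$ transitively permutes pairwise non-$\mu_{d}$-isogenous, $\mu_{d}$-absolutely simple factors is sensible, but the argument then stops exactly where the real difficulty begins, and the gap is not repairable by descent alone. The implication ``$\mu_{d}$-simple $\Rightarrow$ $\mu_{d}$-absolutely simple'' is false as a formal statement about abelian varieties with $\mu_{d}$-action over a non-closed field: a Weil restriction $\mathrm{Res}_{L/\kappa(\eta)}(A)$ of a $\mu_{d}$-absolutely simple $A$, with $\mu_{d}$ acting diagonally, base-changes to $\prod_{\tau}A^{\tau}$ with $G$ permuting the factors transitively; no proper non-trivial partial sum $\sum_{i\in S}A_{i}$ is $G$-stable, so nothing descends and no contradiction arises, yet the variety is not absolutely $\mu_{d}$-simple. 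This is precisely your case $n=1$, $r\geq 2$, and your assertion that ``the compatibility of the $\bbZ[\zeta_{d}]$-action with the cyclic $G$-permutation imposes a combinatorial restriction on $S$ that forces $r=1$'' is stated without proof and cannot be true at this level of generality: the $\mu_{d}$-action commutes with $G$ and restricts to each factor, so it imposes no restriction on $S$ whatsoever. (The introduction's remark that the \'etale case of degree $7$ admits counter-examples via Lange--Ortega is exactly a warning that some geometric input beyond descent is indispensable.) A secondary flaw: in the case $n\geq 2$ the Galois orbit of your copy $W\subset V_{1}$ need not meet the isotypic components in single copies, since elements of the stabilizer of $V_{1}$ in $G$ may move $W$ to different copies of $A_{1}$ inside $V_{1}$; the orbit sum can then be all of $(\sP_{d})_{\bar\eta}$, so no proper invariant subvariety is produced.

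The paper supplies the missing geometric input by degeneration rather than descent. Writing $(\sP_{d})_{L}\sim\prod_{\tau\in J}B^{\tau}$ with $|J|\geq 2$, it restricts to a Lefschetz pencil with irreducible nodal singular members, extends the $B^{\tau}$ over the local ring at a point lying over a nodal fibre, and invokes the semi-abelian reduction of Lemma \ref{lem:3.2}: the special fibre of $\sP_{d}$ has toric rank exactly $\varphi(d)$, which is shared among the conjugates as $\delta\,|J|=\varphi(d)$, where $\delta$ is the toric rank of each $\tilde{B}^{\tau}_{y}$. The homomorphism $\rho_{\tau}\colon\End_{\mu_{d}}(B^{\tau})\to\End(\bbG^{\delta}_{m})$ constructed as in Step \ref{3}, composed with $\psi\colon\End(\bbG^{\delta}_{m})\to\Hom(\bbG^{\delta}_{m},\bbG_{m})\cong\bbZ^{\delta}$, is injective on $\bbZ[\zeta_{d}]$, which has $\bbZ$-rank $\varphi(d)$; hence $\delta\geq\varphi(d)$, forcing $\delta=\varphi(d)$ and $|J|=1$, a contradiction. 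It is this rank-$\varphi(d)$ torus coming from the branched nodal degeneration --- absent in the \'etale situation --- that your argument would need and does not have.
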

\begin{proof} Clearly, if $(\sP_{d})_{\eta}$ is $\mu_{d}$-absolutely simple, then it is $\mu_{d}$-simple. Conversely, assume that $(\sP_{d})_{\eta}$ is $\mu_{d}$-simple but not $\mu_{d}$-absolutely simple. Then, there is a finite field extension $L\supset\kappa(\eta)$ and a non-zero and proper $\mu_{d}$-simple abelian subvariety $B$ of $(\sP_{d})_{L}$, such that $(\sP_{d})_{L}$ can be written up to isogeny as a product $\prod B^{\tau}$, where $B^{\tau}$ stands for a Galois conjugate of $B$ and $\tau$ runs through a finite subset $J\subset\Gal(L/\kappa(\eta))$ of cardinality greater equal to 2. The field extension $L\supset\kappa(\eta)$ gives rise to a morphism $g\colon U'\longrightarrow U$, which we may assume is étale. For $\tau\in J$, we let $\varphi_{\tau}$ be the endomorphism of $(\sP_{d})_{L}$ whose image is $B^{\tau}$. More explicitly, $\varphi_{\tau}$ is given by \[(\sP_{d})_{L}\overset{\sim}\longrightarrow \prod B^{\tau}\overset{proj}\longrightarrow B^{\tau}\subset(\sP_{d})_{L}.\] Pick a Lefschetz pencil $(C_{t})_{t\in\bbP^{1}}$, such that its singular members are irreducible and intersect the branch locus $\Delta$ transversally. Let $X$ be any irreducible component of $g^{-1}(\bbP^{1}\cap U)$. Then, $X$ dominates $\bbP^{1}\cap U$ and if $\theta\in X$ is its generic point, then each $\varphi_{\tau}$ determines an endomorphism of $\sP_{d}$ over $\theta$, e.g. using the N\'eron mapping property, such that if $B^{\tau}\coloneqq\im(\varphi_{\tau})$, then $\prod B^{\tau}\sim (\sP_{d})_{\theta}$. Let $\bar{X}$ be a smooth compactification of $X$ and $\bar{X}\longrightarrow \bbP^{1}$ the extension of $g\colon X\longrightarrow \bbP^{1}\cap U$. Fix a point $y\in \bar{X}$ lying over a point of the pencil which corresponds to a nodal curve and consider the local ring $R$ of $\bar{X}$ at $y$. Since $\sP_{d}$ admits a semi-abelian reduction over $R$ , cf. \eqref{lem:3.2} the same is true for all $B^{\tau}$, cf. \cite[Cor.\ 7.1.6]{blr}. We denote by $\tilde{B}^{\tau}$ the identity component of the N\'eron model of $B^{\tau}$. Then, the isogeny of the generic fibre extends to an isogeny $\prod \tilde{B}^{\tau}\sim\sP_{d}$ over $R$, cf. \cite[Prop.\ 7.3.6]{blr}. Since $(\sP_{d})_{y}$ is an extension of an abelian variety by a torus of rank $\varphi(d)$, cf. \eqref{lem:3.2}, it follows that the toric part of $\tilde{B}^{\tau}_{y}$ has rank $\delta,\ 1\leq\delta\leq\varphi(d)$, such that $\delta|J|=\varphi(d)$. As in Step \ref{3}, one constructs a homomorphism $\rho_{\tau} \colon \End_{\mu_{d}}(B^{\tau}) \longrightarrow \End(\mathbb{G}^{\delta}_{m})$. Since the restriction of $\psi\circ\rho_{\tau}$ to $\bbZ[\zeta_{d}]\subset\End_{\mu_{d}}(B^{\tau})$ is injective, where $\psi\coloneqq\pr_{1}\circ-\colon\End(\bbG^{\delta}_{m})\longrightarrow\Hom(\bbG^{\delta}_{m},\bbG_{m})$ we find that $\delta=\varphi(d)$ and $|J|=1$, which is absurd.\end{proof}
\end{section}
\begin{section}{The Proof of Theorem \ref{thm:1.1}}
According to the results of Section \ref{sec:3}, our task to prove Theorem \ref{thm:1.1} is reduced to showing $(\sP_{d})_{\eta}$ is a $\mu_{d}$-simple abelian variety. Recall, that we have an isogeny \[\Jac(C'_{\eta})\sim \Jac(C_{\eta})\times \prod_{d|n,\ d\neq 1} (\sP_{d})_{\eta}.\] Given a non-zero endomorphism $\varepsilon\in\End_{\mu_{d}}((\sP_{d})_{\eta})$. Then, by considering the composite \[{ \varepsilon }'\colon \Jac(C'_{\eta})\overset{\sim }{\longrightarrow} \Jac(C_{\eta})\times \prod_{d|n,\ d\neq 1}(\sP_{d})_{\eta}\overset{\pr_{d}}{\longrightarrow}(\sP_{d})_{\eta}\overset{\varepsilon}\longrightarrow(\sP_{d})_{\eta}\hookrightarrow \Jac(C'_{\eta}),\]we get a $\mu_{d}$-equivariant endomorphism of $\Jac(C'_{\eta})$ whose restriction to $(\sP_{d})_{\eta}$ is simply $\varepsilon\circ[n]$. Hence, it suffices to show that that the restriction of $\varepsilon'$ to $(\sP_{d})_{\eta}$ lies in $\bbZ[\zeta_{d}]$. Recall, that abelian schemes satisfy a stronger N\'eron mapping property, cf. \cite[Sec.\ 3.1.5]{ro}. Thus, the endomorphism $\varepsilon'$ extends to an endomorphism \[\varepsilon'\colon\Pic^{0}_{\sY/U}\longrightarrow \sP_{d}\subset\Pic^{0}_{\sY/U}.\] Let $[T]\in\Corr_{U}(\sY)$ be the class of a correspondence $T$ on $\sY\times_{U}\sY$ associated to the endomorphism $\varepsilon'$, cf. \eqref{prop:2.2}. We write $T=\sum n_{i}T_{i}$, where $T_{i}$ are prime divisors. Let $\Sigma$ be a general two dimensional linear system in $|\sH|_{x}$, i.e. the general member of $\Sigma$ is smooth and intersects the branch locus $\Delta$ transversally. Then, the correspondences $T_{i}$ are all defined over a non-empty open subset of $\Sigma$ and we can construct a rational map \begin{tikzcd}
{\phi _{\Sigma,T_{i}}\colon{ S}'} \arrow[r, dashed] & \Div^{+}({ S}'),\ y\mapsto \Gamma^{i}_{y}
\end{tikzcd}
, cf. \cite[pp.\ 38]{cv}. Especially, we get a rational map\begin{center} \begin{tikzcd}
{\phi _{\Sigma,T}\colon{ S}'} \arrow[r, dashed] & \Pic({ S}'),\ y\mapsto [\Gamma_{y}]\coloneqq\sum n_{i}[\Gamma^{i}_{y}].
\end{tikzcd}\end{center}
\par Let $[ C ]\in | \sH  |_{x}$ be a general member and choose a general two-dimensional linear system $\Sigma$ containing $[C]$. Consider the rational map $\phi _{\Sigma,T}$. Then, for a general point $y\in {C}'$ we get a divisor $\Gamma _{y}=\phi _{\Sigma,T}(y)$ on ${S}'$. Set $w=f(y)\in C$, $f^{-1}(w)= \{ y,\sigma(y),\ldots, \sigma^{n-1}(y)\}$ and $f^{-1}(x)= \{ z,\sigma(z),\ldots,\sigma^{n-1}(z)\}$, where $\sigma$ is a generator of the Galois group of the covering $f$. The following lemma computes the divisor $E_{y}$ in ${C}'$ corresponding to the intersection of ${C}'$ with $\Gamma _{y}$.
\begin{lemma}\label{lem:4.1} We have that $E_{y}={\alpha}_{0} z+{\alpha}_{1}\sigma(z)+\ldots+{\alpha}_{n-1}\sigma^{n-1}(z)+{\beta}_{0} y+ {\beta}_{1}\sigma(y)+\ldots+{\beta}_{n-1}\sigma^{n-1}(y)+\gamma {\sB}'_{x,w}+T_{{C}'}(y)$, where $\alpha_{i} ,\beta_{i} ,\gamma \in\mathbb{Z}$ and ${\sB}'_{x,w}$ is the pull-back of the divisor of base points different from $x$ and $w$ of $\Sigma_{w}$ under the covering $f$.
\end{lemma}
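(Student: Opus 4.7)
The plan is to unpack the construction of $\phi_{\Sigma,T}$ recalled from \cite[pp.\ 38]{cv} and split the cycle $E_y=\Gamma_y\cap C'$ into a ``moving'' component, which recovers the correspondence $T$ restricted to $C'\times C'$ applied to $y$, and several ``fixed'' components supported on the base points of the lifted pencil on $S'$. By linearity in $T=\sum n_i T_i$ it suffices to treat each prime component $T_i$ separately and then sum with the weights $n_i$.

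Fix a prime $T_i$ and recall that $\Gamma^i_y=\phi_{\Sigma,T_i}(y)$ is obtained as follows. For $y\in C'\subset S'$ with $w=f(y)$, the subsystem $\Sigma_w\subset\Sigma$ of members of $\Sigma$ passing through $w$ is a pencil with $[C]\in\Sigma_w$, whose base locus on $S$ is $\{x,w\}\cup B_{x,w}$; its pullback along $f$ is a pencil $\{C'_t\}_{t\in\Sigma_w}$ on $S'$ whose base locus is supported in the finite set $\{\sigma^j(z)\}_{j}\cup\{\sigma^j(y)\}_{j}\cup\Supp(\sB'_{x,w})$. For each $t$ the correspondence $T_i$ restricts to a 0-cycle $T_{i,C'_t}(y)$ on $C'_t$, and $\Gamma^i_y$ is the divisor on $S'$ swept out by these cycles as $t$ varies. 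Intersecting with $C'=C'_{t_0}$, the transversal (moving) contribution is exactly $T_{i,C'}(y)$, and summing with weights $n_i$ produces the summand $T_{C'}(y)$ in the formula for $E_y$.

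The remaining contributions to $E_y$ are concentrated at the points of the lifted base locus. At each such point $p$, every curve $C'_t$ in the pencil passes through $p$, so $\Gamma^i_y$ has a fixed multiplicity $m^i_p\in\bbZ$ along $p$, determined by the local behaviour of $T_i$ at $p$ and independent of $t_0$. Summing the $n_i m^i_p$ over $i$ yields integer coefficients which we call $\alpha_j$ at $\sigma^j(z)$ and $\beta_j$ at $\sigma^j(y)$. Since the support of $\sB'_{x,w}$ lies over points of $B_{x,w}\subset S\setminus\{x,w\}$ where $f$ is étale and where all base points of the pencil $\Sigma_w$ play interchangeable roles, the fixed multiplicity along each point of $\sB'_{x,w}$ is the same integer $\gamma$, giving the term $\gamma\sB'_{x,w}$.

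The main technical obstacle will be justifying the constancy and integrality of the fixed multiplicities $m^i_p$: one must verify, by a local calculation near each base point of the pencil on $S'$, that the sweep $\Gamma^i_y$ picks up a reduced multiple of the base point (rather than a thicker infinitesimal contribution), and that this multiple does not depend on where in the pencil one evaluates. Both points reduce to flatness of the relevant incidence varieties over $\Sigma_w$ and to transversality of the restriction of $T_i$ to $C'_t\times C'_t$ for generic $t$, which is where the genericity of $y$, of $[C]\in|\sH|_x$, and of the two-dimensional linear system $\Sigma$ enters decisively.
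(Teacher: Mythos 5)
The paper gives no argument for this lemma beyond the reference to \cite[Lem.\ 3.6]{cv}, and your sketch correctly reconstructs the skeleton of that argument: a point of $E_y=\Gamma_y\cdot C'$ either lies on the $0$-cycle attached to the distinguished member $C'=C'_{t_0}$ itself, contributing $T_{C'}(y)$, or lies on some other member $C'_t$ of the lifted pencil and is therefore a base point of that pencil, i.e.\ belongs to $f^{-1}(x)\cup f^{-1}(w)\cup\Supp(\sB'_{x,w})$. So the decomposition of $E_y$ into a moving part $T_{C'}(y)$ plus a divisor supported on the lifted base locus is the right starting point.

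Two steps in your treatment of the fixed part are not yet proofs, and you have located the ``main technical obstacle'' in the wrong place. First, the assertion that the contribution along $\sB'_{x,w}$ is a single multiple $\gamma\,\sB'_{x,w}$ is supported only by the remark that the base points ``play interchangeable roles''; for a fixed $\Sigma$ and a fixed $w$ these are particular points and nothing a priori forces equal multiplicities. What is needed is a monodromy/uniform-position argument: let $y$ (hence $w$) vary, note that the multiplicities are locally constant on a dense open subset of the incidence variety of pairs $(w,b)$ with $b$ a base point of $\Sigma_w$, and prove that this incidence variety is irreducible so that the monodromy permutes the base points transitively --- this is exactly where the hypotheses on $(S,|\sH|)$ and the genericity of $\Sigma$ enter, and it is the content of the corresponding step in \cite{cv}. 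Moreover, since $\sB'_{x,w}=f^{*}B_{x,w}$, you must also show that the $n$ points of $S'$ lying over a given base point of $B_{x,w}$ receive equal coefficients; this is new to the covering situation (it has no counterpart in \cite{cv}), it does not follow from any symmetry on $S$, and it cannot be obtained by averaging since $\Gamma_y$ is not $\mu_n$-invariant --- the monodromy has to be analysed upstairs on $S'$. Second, for the lemma to feed into the proof of Theorem \ref{thm:1.1} the integers $\alpha_i,\beta_i,\gamma$ must be independent of $y$ (the regular case subtracts $E_{\sigma(y)}$ from $E_y$ and cancels the $\alpha$- and $\gamma$-terms), whereas your flatness remark only addresses independence of the member $t_0$ of the pencil. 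Both issues are settled by the same ``spread out over an irreducible parameter space'' device, but it must be run on the space of pairs (point $y$, base point upstairs), which your sketch does not set up.
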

\begin{proof} Cf. \cite[Lem.\ 3.6]{cv}.
\end{proof}
\subsection{Regular case}\label{10}The branched locus $\Delta$ of the covering $f$ is a smooth ample divisor and thus, the canonical map $\Alb(f)\colon\Alb(S')\longrightarrow\Alb(S)$ induced by $f$ is an isomorphism. Indeed, since $f_{*}\sO_{S'}\cong\bigoplus^{n-1}_{i=0}\sL^{-i}$, the Kodaira Vanishing theorem gives $H^{1}(\sO_{S'})=H^{1}(\sO_{S})$ and hence, $\Alb(f)$ is an isogeny. From this one immediately sees that the induced action on $\Alb(S')$ is trivial, i.e. $\Alb(\sigma)=\id$. Consider the Albanese map $\Alb_{\xi_{o}}\colon S'\longrightarrow \Alb(S')$, where the point $\xi_{o}\in S'$ lies over a point of the branch locus $\Delta\subset S$ and observe that the map is invariant under the $\mu_{n}$-action. Therefore, we find a homomorphism $\Alb(S)\longrightarrow\Alb(S')$ that is inverse to $\Alb(f)$, proving the claim. In particular, we deduce that $q(S)=q(S')$. Here, we give the proof for the case $S$ is regular, i.e. $q(S)=0$.
\begin{proof}[Proof of Theorem \ref{thm:1.1} for the regular case]If $S$ is regular, then $\Pic({S}')$ is discrete and thus, the rational map $\phi _{\Sigma,T}$ is constant. Hence, for a general point $y\in {C}'$, the curves $\Gamma _{y}$ and $\Gamma _{\sigma(y)}$ are linearly equivalent. It follows that $E_{y}$ and $E_{\sigma(y)}$ are also linearly equivalent and so, $E _{y}-E_{\sigma(y)}={\beta}_{0} (y-\sigma(y))+ {\beta}_{1}\sigma(y-\sigma(y))+\ldots+{\beta}_{n-1}{\sigma}^{n-1}(y-\sigma(y))+T_{{C}'}(y-\sigma(y))\sim 0$. Since $\Prym(C'/C)=\im(\id-\sigma^{*})$, the latter forces $T_{{C}'}(y)=(-{\beta}_{0})y+\ldots+(-{\beta}_{n-1})\sigma^{n-1}(y)$ for all $y\in\Prym(C'/C)$. Eventually, we see that the restriction of $T_{C'}$ to $\sP_{d}(C'/C)$ takes the desired form. This yields that the restriction of ${\varepsilon}'$ to $(\sP_{d})_\eta$ lies in $\mathbb{Z}[\zeta_{d}]$, as claimed.\end{proof}
\subsection{Irregular case} We shall use the following lemma.
\begin{lemma} \label{lem:4.2} Let $a\colon \Jac({C}')\longrightarrow \sP_{d}(C'/C)\subset \Jac({C}')$ be a homomorphism and let $T_a$ be a correspondence associated to it, cf. \eqref{prop:2.2}. Assume that there exist $\alpha_{0},\ldots,\alpha_{n-1} \in \mathbb{Z}$, such that for general $y\in{C}'$ the divisor class $T_{a}(y-\sigma(y))+ {\alpha}_{0}(y-\sigma(y))+\ldots+{\alpha}_{n-1}{\sigma}^{n-1}(y-\sigma(y))$ lies in $\Pic^{0}({S}')$. Then, the restriction of $a$ to $\sP_{d}(C'/C)$ lies in $\mathbb{Z}[\zeta_{d}]\subset\End( \sP_{d}(C'/C))$.
 \end{lemma}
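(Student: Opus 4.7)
The plan is to translate the hypothesis into the identity $b(y-\sigma y)\in M$ for all $y\in C'$, where $b:=a+\sum_{i=0}^{n-1}\alpha_{i}\sigma^{i}\in\End(\Jac(C'))$ and $M:=\im(\Pic^{0}(S')\to\Pic^{0}(C'))$, and then to use the ampleness of $\sL$ to conclude that $b|_{\sP_{d}(C'/C)}=0$. Granting this, $a|_{\sP_{d}}=-\sum_{i}\alpha_{i}\sigma^{i}|_{\sP_{d}}$, which under the identification $\sigma|_{\sP_{d}}\leftrightarrow\zeta_{d}$ visibly lies in $\bbZ[\zeta_{d}]$.

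First, I would apply the correspondence-to-endomorphism dictionary of \eqref{prop:2.2} to rewrite the hypothesis: for any $y,y'\in C'$ one has $T_{a}(y-y')=a(y-y')$ in $\Jac(C')$, so the hypothesis says $b(y-\sigma y)\in M$ for $y$ in a non-empty open $V\subset C'$. The morphism $\psi\colon C'\to\Jac(C'),\ y\mapsto b(y-\sigma y)$, satisfies $\psi^{-1}(M)\supset V$; since $M$ is a closed subgroup and $C'$ is irreducible, $\psi^{-1}(M)=C'$, i.e.\ $b(y-\sigma y)\in M$ for every $y\in C'$. Every class in $\Prym(C'/C)=(1-\sigma)\Jac(C')$ is a finite $\bbZ$-combination $\sum_{j}n_{j}(y_{j}-\sigma y_{j})$ of such elements, so the $\bbZ$-linearity of $b$ yields $b(\Prym(C'/C))\subset M$; in particular $b(\sP_{d}(C'/C))\subset M$.

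Next, I would invoke the ample-case computation of Page~\pageref{10}: $\Alb(\sigma)=\id$ on $\Alb(S')$, hence by duality $\sigma$ is the identity on $\Pic^{0}(S')$. Since the restriction $\Pic^{0}(S')\to\Jac(C')$ is $\sigma$-equivariant, $M$ is pointwise fixed by $\sigma$ and therefore $M\subset\sP_{1}(C'/C)=\ker^{0}(\sigma-1)$. On the other hand, $a$ takes values in $\sP_{d}$ and each $\sigma^{i}$ preserves $\sP_{d}$ (it commutes with $\Psi_{d}(\sigma)$), so $b(\sP_{d})\subset\sP_{d}$; combined with $b(\sP_{d})\subset M\subset\sP_{1}$ this forces $b(\sP_{d})\subset\sP_{1}\cap\sP_{d}$. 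The latter is finite because $\Psi_{1}$ and $\Psi_{d}$ are coprime in $\bbQ[T]$, so by connectedness of $\sP_{d}$ we conclude $b|_{\sP_{d}}=0$.

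The step I expect to be the most delicate is the first one, namely correctly matching the correspondence hypothesis with the endomorphism $b$ via \eqref{prop:2.2}, together with the passage from ``general $y$'' to all of $\Prym(C'/C)$; once that translation is in place, the second step is essentially automatic, relying only on the triviality of $\sigma$ on $\Pic^{0}(S')$ in the ample case and the disjointness of the isotypic components of $\Jac(C')$.
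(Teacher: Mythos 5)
Your argument is correct and follows the paper's proof in all essentials: both reduce to showing that $b:=a+\sum_{i}\alpha_{i}\sigma^{i}$ maps $\sP_{d}(C'/C)$ into a subvariety meeting it in a finite set, with the key input in each case being the triviality of the $\sigma$-action on $\Pic^{0}(S')$ (equivalently, that $\Alb(f)$ is an isomorphism), which comes from the ampleness of $\sL$. The only (cosmetic) difference is the finiteness step: the paper shows $\im(i^{*})\subset f^{*}\Pic^{0}(C)$ and invokes the finiteness of $f^{*}\Pic^{0}(C)\cap\Prym(C'/C)$, whereas you show $\im(i^{*})\subset\sP_{1}(C'/C)$ and use the coprimality of $\Psi_{1}$ and $\Psi_{d}$; these amount to the same thing, since $f^{*}\Pic^{0}(C)$ and $\sP_{1}(C'/C)$ agree up to a finite subgroup.
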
  
\begin{proof} As $\Prym(C'/C)=\im(\id-\sigma^{*})$, the assumption clearly implies that $a(y)+ {\alpha}_{0}y+\ldots+{\alpha}_{n-1}{\sigma}^{n-1}(y) \in \im(i^{*})$ for all $y\in\Prym(C'/C)$, where $i^{*}\colon\Pic^{0}({S}')\longrightarrow \Pic^{0}({C}')=\Jac({C}')$ is the natural pull-back induced by ${C}'\hookrightarrow {S}'$. If we show that the intersection $\im(i^{*})\cap \Prym(C'/C)$ is finite, then the result follows immediately. Indeed, consider the commutative square: 
 \begin{center}
 \begin{tikzcd}
\Pic^{0}(S) \arrow[d, "f^{*}"] \arrow[r, "i^{*}"] & \Pic^{0}(C) \arrow[d, "f^{*}"] \\
\Pic^{0}({S}') \arrow[r, "i^{*}"]                 & \Pic^{0}({C}')     .           
\end{tikzcd}
\end{center}
The canonical map $\Alb({S}')\longrightarrow\Alb(S)$ induced by $f$ is an isomorphism and so, is its dual, which is $f^{*}$. Hence, the latter yields that $\im(i^{*}\colon \Pic^{0}({S}')\longrightarrow \Pic^{0}({C}'))\subset f^{*}(\Pic^{0}(C))$. By the definition of $\Prym(C'/C)$, we know that $f^{*}(\Pic^{0}(C))\cap\Prym(C'/C)$ is finite and so, is the intersection $\im(i^{*})\cap\Prym(C'/C)$, proving the claim.\end{proof}
 \begin{proof}[Proof of Theorem \ref{thm:1.1} for the irregular case] Using the curves $\Gamma _{y}$ we find that $E _{y}-E_{\sigma(y)}$ lies in the image of $\Pic({S}')\longrightarrow \Pic({C}')$. Therefore, we have that $T_{{C}'}(y-\sigma(y))+{\beta}_{0} (y-\sigma(y))+ {\beta}_{1}\sigma(y-\sigma(y))+\ldots+{\beta}_{n-1}{\sigma}^{n-1}(y-\sigma(y)) \in \im(i^{*}\colon\Pic({S}')\longrightarrow \Pic({C}'))$ for general $y\in C'$. It follows that ${\varepsilon}'\in \mathbb{Z}[\zeta_{d}]\subset \End((\sP_{d})_{\eta})$, cf. \eqref{lem:4.2}.\end{proof}
\end{section}
\begin{section}{The proof of Theorem \ref{thm:1.3}} The proof is similar to the case of \eqref{thm:1.1}. First, we need to replace our earlier family $\varphi_{d}\colon\sP_{d}\longrightarrow U$. In particular, we consider the abelian fibration \[\sR_{d}\coloneqq\ker^{0}(\sP_{d}\longrightarrow \Alb(S')\times U).\] Assume that the abelian fibration $\varphi_{d}\colon\sR_{d}\longrightarrow U$ is non-zero, i.e. $\sR_{[C]}\neq0$ for $[C]\in U$. Then, we show that for the very general member $[C]\in U$, we have that $\End_{\mu_{d}}((\sR_{d})_{[C]})\cong\bbZ[\zeta_{d}]$. One checks that the results $\eqref{prop:3.3}$ and $\eqref{lem:3.4}$ still hold true for the family $\varphi_{d}\colon\sR_{d}\longrightarrow U$.\par We proceed as in the proof of Theorem \ref{thm:1.1}. A non-zero endomorphism $\varepsilon\in\End_{\mu_{d}}((\sR_{d})_{\eta})$ gives rise to a $\mu_{d}$-equivariant endomorphism $\varepsilon'\in\End(\Jac(C'_{\eta}))$ and it is enough to check that the restriction of $\varepsilon'$ to $(\sR_{d})_{\eta}$ lies in $\bbZ[\zeta_{d}]$. The following lemma is needed.
\begin{lemma}\label{lem:5.1} Let $a\colon \Jac({C}')\longrightarrow \sR_{d}(C',C,{S}')\subset \Jac({C}')$ be a homomorphism and let $T_a$ be a correspondence associated to it, cf. \eqref{prop:2.2}. Assume that there exist $\alpha_{0},\ldots,\alpha_{n-1} \in \mathbb{Z}$, such that for general $y\in{C}'$ the divisor class $T_{a}(y-\sigma(y))+ {\alpha}_{0}(y-\sigma(y))+\ldots+{\alpha}_{n-1}{\sigma}^{n-1}(y-\sigma(y))$ lies in $\Pic^{0}({S}')$. Then, the restriction of $a$ to $\sR_{d}(C',C,{S}')$ lies in $\bbZ[\zeta_{d}]\subset\End( \sR_{d}(C',C,{S}'))$.\end{lemma}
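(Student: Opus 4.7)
The plan is to mimic the proof of \eqref{lem:4.2} step by step, isolating the one place where the absence of the ample hypothesis on $\sL$ forces a new argument. First I would repeat the opening reduction: the hypothesis $T_a(y-\sigma(y)) + \sum_{i=0}^{n-1}\alpha_i \sigma^i(y-\sigma(y)) \in \Pic^0(S')$ combined with $\Prym(C'/C) = \im(\id - \sigma^*)$ shows that, for every $y \in \Prym(C'/C)$,
\[
a(y) + \alpha_0 y + \alpha_1 \sigma(y) + \ldots + \alpha_{n-1}\sigma^{n-1}(y) \in \im\bigl(i^*\colon \Pic^0(S')\to \Jac(C')\bigr).
\]
Since $\sR_d(C',C,S') \subset \sP_d(C'/C) \subset \Prym(C'/C)$, this holds in particular for $y \in \sR_d(C',C,S')$; and because $\sR_d(C',C,S')$ is $\mu_d$-stable (as the identity component of the kernel of a $\mu_d$-equivariant morphism) and $a$ factors through $\sR_d(C',C,S')$, the left-hand side again lies in $\sR_d(C',C,S')$. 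So the displayed element belongs to $\sR_d(C',C,S')\cap \im(i^*)$.

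The heart of the argument is to show this intersection is finite. In \eqref{lem:4.2} this followed from $\Alb(f)$ being an isomorphism, which is no longer available. Instead, I would use that $C'=f^{-1}(C)=f^*C$ is ample on $S'$, being the pullback of the ample divisor $C$ along the finite morphism $f$. Letting $\pi\colon \Jac(C') \to \Alb(S')$ denote the natural map induced by $C'\hookrightarrow S'$, which is dual to $i^*$, a standard computation (cup product with $[C']$) identifies the composition $\pi\circ i^*\colon \Pic^0(S')\to \Alb(S')$ with the polarisation homomorphism $\varphi_{[C']}$. Since $[C']$ is ample, $\varphi_{[C']}$ is an isogeny. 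It follows that $\ker(\pi)\cap \im(i^*)$ is finite, and since $\sR_d(C',C,S') \subset \ker(\pi)$ by the very definition of $\sR_d$, the intersection $\sR_d(C',C,S')\cap \im(i^*)$ is finite as well.

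To conclude, as in \eqref{lem:4.2}, the regular map $\sR_d(C',C,S') \to \sR_d(C',C,S')\cap \im(i^*)$ sending $y \mapsto a(y)+\sum_i \alpha_i \sigma^i(y)$ takes values in a finite set, hence is constant; evaluating at $0$ forces the constant to be $0$. Thus $a|_{\sR_d(C',C,S')} = -\sum_{i=0}^{n-1}\alpha_i (\sigma^*)^i$, and this endomorphism acts on $\sR_d(C',C,S') \subset \sP_d(C'/C) = \ker^0(\Psi_d(\sigma^*))$ through the quotient $\bbZ[\sigma^*]/(\Psi_d(\sigma^*)) \cong \bbZ[\zeta_d]$, as required. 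The main obstacle compared with \eqref{lem:4.2} is the identification of $\pi\circ i^*$ with the polarisation $\varphi_{[C']}$ together with the observation that $[C']$ is ample on $S'$; both are standard but constitute the only genuinely new input in the non-ample setting.
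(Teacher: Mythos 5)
Your proof is correct and follows the paper's argument for this lemma essentially step for step: reduce to showing that $a(y)+\sum_i\alpha_i\sigma^i(y)$ lies in $\im(i^*)\cap\ker\bigl(\Jac(C')\to\Alb(S')\bigr)$ and prove that this intersection is finite. The only difference is that you actually justify the finiteness --- via the observation that $\pi\circ i^*$ is cup product with the ample class $[C']=f^*[C]$, hence an isogeny by hard Lefschetz --- whereas the paper merely writes ``observe that the intersection is finite''; this is the right justification and fills in the one detail the paper leaves implicit.
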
  
\begin{proof} Clearly, we have that $a(y)+{\alpha}_{0}y+\ldots+{\alpha}_{n-1}{\sigma}^{n-1}(y)\in\im(i^{*})$ for all $y\in\Prym(C'/C)$, where $i^{*}\colon\Pic^{0}({S}')\longrightarrow \Pic^{0}({C}')=\Jac({C}')$ is the pull-back induced by ${C}'\hookrightarrow {S}'$. Let $\sK(C',S')\coloneqq\ker(\Jac(C')\longrightarrow\Alb(S'))$ and observe that the intersection $\im(i^{*})\cap \sK(C',S')$ is finite. Since $\sR_{d}(C',C,{S}')\subset \sK(C',S')$, we find that $a(y)+{\alpha}_{0}y+\ldots+{\alpha}_{n-1}{\sigma}^{n-1}(y)=0$ for all $y\in \sR_{d}(C',C,{S}')$. Therefore, the restriction of $a$ to $\sR_{d}(C',C,{S}')$ belongs to $\bbZ[\zeta_{d}]$, as claimed.\end{proof}
\begin{proof}[Proof of Theorem \ref{thm:1.3}] Using the curves $\Gamma _{y}$ one sees that $E _{y}-E_{\sigma(y)}$ lies in the image of $\Pic({S}')\longrightarrow \Pic({C}')$. It follows that $T_{{C}'}(y-\sigma(y))+{\beta}_{0} (y-\sigma(y))+ {\beta}_{1}\sigma(y-\sigma(y))+\ldots+{\beta}_{n-1}{\sigma}^{n-1}(y-\sigma(y)) \in \im(i^{*}\colon\Pic({S}')\longrightarrow \Pic({C}'))$. Now, the result is an immediate consequence of \eqref{lem:5.1}.\end{proof}
\end{section}
\printbibliography
\vspace{1cm}
\begin{flushleft}
\address{MATHEMATISCHES INSTITUT, UNIVERSIT\"AT BONN, ENDENICHER ALLEE 60, 53115 BONN, GERMANY}\\
\address{{\bf{Current address}}: INSTITUT F\"UR ALGEBRAISCHE GEOMETRIE, LEIBNIZ UNIVERSIT\"AT HANNOVER, WELFENGARTEN 1, 30167 HANNOVER, GERMANY.}\\
\email{{\bf{Email address:}\ }alexandrou@math.uni-hannover.de}
\end{flushleft}
\end{document}